\documentclass[hidelinks,onefignum,onetabnum]{siamart220329}


\usepackage{amsfonts}
\usepackage{graphicx}
\ifpdf
\DeclareGraphicsExtensions{.eps,.pdf,.png,.jpg}
\else
\DeclareGraphicsExtensions{.eps}
\fi


\newsiamremark{remark}{Remark}
\newsiamremark{hypothesis}{Hypothesis}
\crefname{hypothesis}{Hypothesis}{Hypotheses}
\newsiamthm{claim}{Claim}

\headers{A Rellich-type theorem for a junction of stratified media}{S. Al Humaikani, A.-S. Bonnet-Ben Dhia, S. Fliss, and C. Hazard}

\title{A Rellich-type theorem for the Helmholtz equation in a junction of stratified media\thanks{Submitted to the editors 04/16/2025.}}

\author{Sarah Al Humaikani\thanks{POEMS, CNRS, Inria, ENSTA Paris, Institut Polytechnique de Paris, 91120 Palaiseau, France (\email{sarah.al-humaikani@ensta.fr}, \email{anne-sophie.bonnet-bendhia@ensta.fr}, \email{sonia.fliss@ensta.fr}, \email{christophe.hazard@ensta.fr}).}
	\and Anne-Sophie Bonnet-Ben Dhia\footnotemark[2]
	\and Sonia Fliss\footnotemark[2]
	\and Christophe Hazard\footnotemark[2]}

\usepackage{amsopn}






\usepackage{amsfonts}
\usepackage{amssymb}
\usepackage{graphicx}
\usepackage{dsfont}
\usepackage{amsopn}
\usepackage{stmaryrd} 
\usepackage{tikz}
\usepackage{pgfplots}

\pgfplotsset{every axis/.append style={
		axis x line=middle,   
		axis y line=middle,   
		axis line style={->,color=black}, 
		xlabel={$x$},
		ylabel={$y$},
}}
\usetikzlibrary{decorations.pathmorphing}
\tikzset{snake it/.style={decorate, decoration=snake}}
\usetikzlibrary{shapes}
\usetikzlibrary{patterns,patterns.meta,calc,intersections,decorations.markings}
\usetikzlibrary{positioning}

\usepgfplotslibrary{fillbetween}
\usepgflibrary{shadings}
\DeclareMathOperator*{\esssup}{ess\,sup}

\usepackage{array}
\newcolumntype{M}[1]{>{\centering\arraybackslash}m{#1}}
\newcolumntype{L}[1]{>{\raggedleft\arraybackslash}m{#1}}
\makeatletter
\newcommand{\thickhline}{%
	\noalign {\ifnum 0=`}\fi \hrule height 1pt
	\futurelet \reserved@a \@xhline
}

\newcolumntype{"}{@{\hskip\tabcolsep\vrule width 1pt\hskip\tabcolsep\hspace{-6pt}}}
\newcolumntype{[}{@{\vrule width 1pt\hspace{6pt}}} 
\newcolumntype{]}{@{\hspace{6pt}\vrule width 1pt}} 
\newcommand{\thickcline}[1]{%
	\@thickcline #1\@nil%
}

\def\@thickcline#1-#2\@nil{%
	\omit
	\@multicnt#1%
	\advance\@multispan\m@ne
	\ifnum\@multicnt=\@ne\@firstofone{&\omit}\fi
	\@multicnt#2%
	\advance\@multicnt-#1%
	\advance\@multispan\@ne
	\leaders\hrule\@height1pt\hfill
	\cr
	\noalign{\vskip-1pt}%
}
\makeatother


\newcommand{\C}{\mathbb{C}}
\newcommand{\calC}{\mathcal{C}}
\renewcommand{\d}{\mathrm{d}}
\newcommand{\rmD}{\mathrm{D}}
\newcommand{\calD}{\mathcal{D}}
\newcommand{\e}{{\rm{e}}}
\newcommand{\E}{\textsc{e}}
\newcommand{\calF}{\mathcal{F}}
\renewcommand{\H}{\mathcal{H}}
\newcommand{\rmH}{\mathrm{H}}
\renewcommand{\i}{\mathrm{i}}
\newcommand{\J}{\textsc{j}}
\renewcommand{\k}{\textsc{k}}
\newcommand{\km}{\textsc{k}_{-}}
\newcommand{\kp}{\textsc{k}_{+}}
\newcommand{\kpm}{\textsc{k}_\pm}
\newcommand{\N}{\textsc{n}}
\newcommand{\R}{\mathbb{R}}
\newcommand{\x}{\textsc{x}}
\newcommand{\y}{\textsc{y}}
\newcommand{\W}{\textsc{w}}

\renewcommand{\Im}{\mathrm{Im}}
\renewcommand{\Re}{\mathrm{Re}}

\newcommand{\HH}{\widehat{\mathcal H}}
\newcommand{\Hphi}{\widehat\varphi}
\newcommand{\Hpsi}{\widehat\psi}
\newcommand{\Hu}{\widehat u}
\newcommand{\Tphi}{\widehat\varphi}
\newcommand{\Tu}{\widehat u}
\renewcommand{\widetilde}[1]{{\widehat{#1}}}


\begin{document}





\maketitle
\begin{abstract}
  We prove that there are no non-zero square-integrable solutions to a two-dimension\-al Helmholtz equation in some unbounded inhomogeneous domains which represent junctions of stratified media. More precisely, we consider domains that are unions of three half-planes, where each half-plane is stratified in the direction orthogonal to its boundary. As for the well-known Rellich uniqueness theorem for a homogeneous exterior domain, our result does not require any boundary condition. Our proof is based on half-plane representations of the solution which are derived through a generalization of the Fourier transform adapted to stratified media. A byproduct of our result is the absence of trapped modes at the junction of open waveguides as soon as the angles between branches are greater than $\pi/2$.
\end{abstract}

\begin{keywords}
	Helmholtz equation, stratified media, open waveguides, generalized Fourier transform, embedded eigenvalues, trapped modes,  Rellich theorem, uniqueness theorem
\end{keywords}

\begin{MSCcodes}
	35A05, 35J05, 42A38, 78A50 
\end{MSCcodes}

\section{Introduction}
\subsection{Motivation and main result}\label{setting pb}
In the present paper, we study the possible existence of square-integrable solutions to a two-dimensional  Helmholtz equation with varying wavenumber in unbounded media that can be interpreted as  \emph{junctions of stratified half-planes}.

We first introduce what we call hereafter a \emph{stratified half-plane}, defined in a local coordinate system $(\x,\y)$ by
\begin{equation}
	\rmH := \{ (\x,\y) \in \R^2 \mid \y > 0 \} = \R \times \R^+,
	\label{def half plane}
\end{equation}
where we denote $\R^+ := (0,+\infty)$. This half-plane is called stratified in the sense that it is associated with a real-valued wavenumber function $\k$ that only depends on the $\x$-coordinate, i.e., $\k = \k(\x)$. In other words, the stratification is orthogonal to the boundary of $\rmH$. In this paper, the direction parallel to the $\x$-axis is called \emph{transverse}, whereas the perpendicular one (along the $\y$-axis) is called \emph{longitudinal}. Moreover we assume that $\k$ is bounded and becomes constant outside a bounded interval, that is, there exist positive numbers $(\km,\kp)$ and a pair $(\x^-,\x^+) \in \R^2$ with $\x^- \leq \x^+$ such that
\begin{equation}
	\forall \x \in \R, \quad 
	\k(\x) = \km \text{ if } \x < \x^-
	\quad\text{and}\quad 
	\k(\x) = \kp \text{ if } \x > \x^+.
	\label{eq:assump-K}
\end{equation}

\begin{figure}[t]
	\centering
		\centering
		\centering\includegraphics[height=5cm]{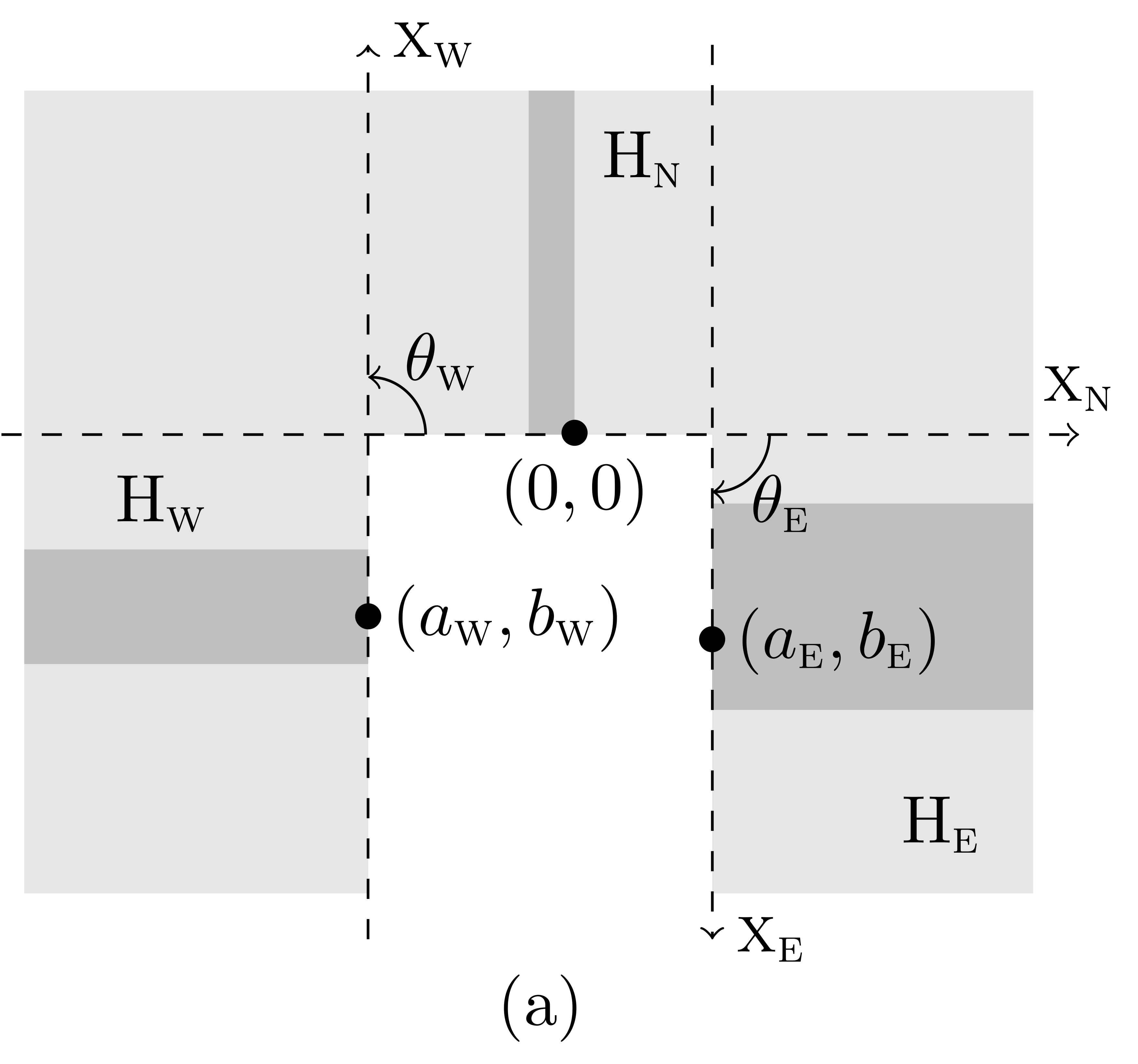}
		\hspace{.02\columnwidth}
\includegraphics[height=5cm]{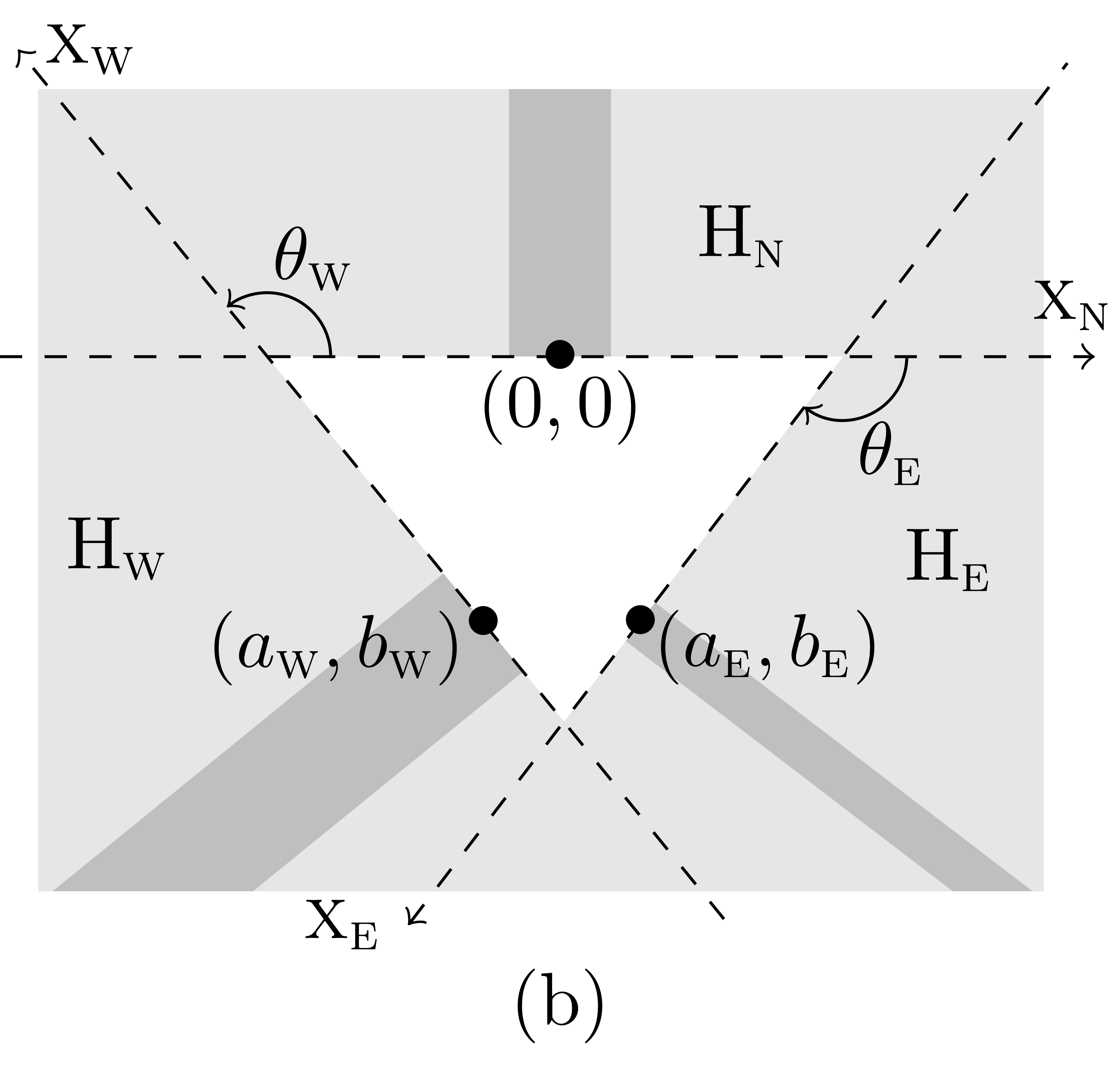}
\caption{The two considered configurations. (a): right-angle, (b): general angle. The light gray represents various constant values of $k$, whereas the dark gray stands for possible stratifications.}
\label{fig:two-config}  
\end{figure}

The media we are interested in, illustrated in \cref{fig:two-config}, represent \emph{junctions} of three such stratified half-planes, denoted $\rmH_\W$, $ \rmH_\N$ and $ \rmH_\E$ (where the indices stand respectively for west, north and east). Each of them is associated with local coordinates $(\x_\J,\y_\J)$ and a wavenumber function $\k_\J = \k_\J(\x_\J)$ for $\J \in \{\W,\N,\E\}$, as described above. We choose a global coordinate system $(x,y)$ which coincides with the local system of the northern half-plane, whereas for each of the two other half-planes, the local coordinates $(\x_\J,\y_\J)$ are obtained from the global ones $(x,y)$ by a rotation of angle $\theta_\J$ centered at a given point $(a_\J,b_\J)$, for $\J \in \{\W,\E\}$. More precisely, we have
\begin{equation}
	\begin{pmatrix}
		\x_\N \\ \y_\N
	\end{pmatrix} = \begin{pmatrix}x \\ y\end{pmatrix}\quad\text{and}\quad
	\begin{pmatrix}
		\x_\J \\\y_\J
	\end{pmatrix}=\begin{pmatrix}
		\cos\theta_\J & \sin\theta_\J \\ -\sin\theta_\J & \cos\theta_\J
	\end{pmatrix}\begin{pmatrix} x -a_\J \\ y-b_\J\end{pmatrix}\text{ for }\J\in\{\W,\E\},
	\label{eq: coord locales}
\end{equation}
where $\theta_\W \in(0,+\pi)$ and $\theta_\E \in(-\pi,0)$. We denote by $\Omega$ the union of the three half-planes, i.e.,
\begin{equation*}
	\Omega := \rmH_\W \cup \rmH_\N \cup \rmH_\E,
\end{equation*}
and we consider the wavenumber function defined in $\Omega$ by
\begin{equation}
	\forall \J \in \{\W,\N,\E\}, \ \forall (x,y) \in \rmH_\J, \quad k(x,y) := \k_\J\big( \x_\J(x,y)\big),
\end{equation}
where the function $\x_\J(x,y)$ refers to the mappings \cref{eq: coord locales} between global coordinates $(x,y)$ and local ones $(\x_\J,\y_\J)$. Of course, for this definition to make sense, it is necessary that for all points $(x,y)$ in the intersection of two half-planes, both possible definitions of $k(x,y)$ coincide. This clearly implies that $k$ must be constant in each intersection, hence that the stratifications must be located outside these intersections. This also implies that $|\theta_\J| \geq\pi/2$ for $\J\in\{\W,\E\}$, since the stratifications are orthogonal to the boundaries of the $\rmH_\J$'s. In other words, we assume that
\begin{equation}
	\theta_\W \in [+\pi/2,+\pi) \quad\text{and}\quad \theta_\E \in(-\pi,-\pi/2].
	\label{eq: theta W E}
\end{equation}
These conditions lead us to consider the two configurations depicted in \cref{fig:two-config}. The so-called \emph{right-angle configuration} on the left-hand side corresponds to the limit case where $(\theta_\W,\theta_\E ) = (+\pi/2,-\pi/2)$, for which $\Omega$ is the complement of a semi-infinite strip. In the other case that we call the \emph{general angle configuration}, we have $(\theta_\W,\theta_\E ) \neq (+\pi/2,-\pi/2)$, which shows that $\Omega$ is now the complement of a triangle.

The aim of the present paper is to prove the following statement.

\begin{theorem}\label{Theorem: main}
	Considering $\Omega\subset \R^2$ and a function $k\in L^\infty(\Omega)$ as described above, if $u\in L^2(\Omega)$ satisfies the Helmholtz equation
	\begin{equation}
		-\Delta u - k^2 u = 0 \text{ in }\Omega \label{eq Helmholtz}
	\end{equation}
	in the distributional sense, then $u=0$ in $\Omega$.
\end{theorem}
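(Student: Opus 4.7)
Following the strategy announced in the abstract, I would derive a ``half-plane representation'' of $u$ in each of the three stratified half-planes $\rmH_\J$ and then reconcile the three. For each $\J\in\{\W,\N,\E\}$, introduce the self-adjoint transverse operator $A_\J := -\partial_{\x_\J}^2-\k_\J(\x_\J)^2$ on $L^2(\R)$. Since $-\k_\J^2$ is bounded and eventually constant, $A_\J$ is a one-dimensional Schrödinger-type operator whose essential spectrum is a half-line bounded from below by a negative constant, with possibly finitely many isolated eigenvalues lying below (the guided modes of the open waveguide $\rmH_\J$). Let $\mathcal F_\J$ denote a generalized Fourier transform diagonalizing $A_\J$. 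Rewriting \cref{eq Helmholtz} in $\rmH_\J$ as $A_\J u = \partial_{\y_\J}^2 u$ and applying $\mathcal F_\J$ in the transverse variable yields, for spectral-measure almost every $\lambda$ and with $\widehat u_\J(\lambda,\y_\J):=(\mathcal F_\J u(\cdot,\y_\J))(\lambda)$,
\begin{equation*}
\partial_{\y_\J}^2 \widehat u_\J(\lambda,\y_\J)=\lambda\,\widehat u_\J(\lambda,\y_\J), \qquad \y_\J>0.
\end{equation*}

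Plancherel for $\mathcal F_\J$ together with Fubini gives $\widehat u_\J(\lambda,\cdot)\in L^2(\R^+)$ for a.e.\ spectral parameter $\lambda$. The ODE above has no nonzero $L^2(\R^+)$ solution when $\lambda\leq 0$, so the spectral support of $\widehat u_\J(\cdot,\y_\J)$ must sit inside $\{\lambda>0\}$, which is a portion of the continuous (essential) spectrum of $A_\J$; in particular the discrete eigenvalues (all located below the essential spectrum, hence at $\lambda<0$) drop out automatically. For $\lambda>0$ the unique $L^2(\R^+)$ solution is $\widehat u_\J(\lambda,0)\,\e^{-\sqrt{\lambda}\,\y_\J}$. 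Inverting $\mathcal F_\J$ thus represents $u$ in $\rmH_\J$ as a continuous superposition of generalized transverse eigenmodes oscillating in $\x_\J$ and decaying exponentially in $\y_\J$ at rate $\sqrt{\lambda}$---this is the half-plane representation.

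The three half-planes cover $\Omega$ and overlap pairwise in quadrant-like subdomains in which $k$ is constant by construction; on each such overlap $u$ admits two half-plane representations which must coincide, so $u$ is exponentially decaying in two distinct longitudinal directions simultaneously. The hypothesis $|\theta_\J|\geq\pi/2$ plays an essential geometric role here: it ensures that the longitudinal direction of each half-plane has a nonzero component along the longitudinal direction of its neighbor (no grazing configuration), so that the two decays in an overlap are genuinely compatible. Equating both representations in a common coordinate system then yields an identity between two spectral expansions of a single function with respect to two different generalized Fourier transforms; combining this identity with the analyticity in $\lambda$ of the generalized eigenfunctions (which are explicit exponentials outside the compact support of each stratification) and with the exponential decay of every mode should force all the spectral densities $\widehat u_\J(\cdot,0)$ to vanish. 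Once any one of the half-plane representations is trivial, $u\equiv 0$ on that half-plane, hence on an open subset of $\Omega$, and standard unique continuation for $-\Delta-k^2$ with $k\in L^\infty$ then yields $u\equiv 0$ on the whole connected domain $\Omega$.

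The delicate step is the matching in the overlaps. Because only the continuous spectrum of each $A_\J$ contributes, the identities to be reconciled are genuinely distributional with respect to spectral measures that depend nontrivially on each $\k_\J$, and the two generalized Fourier transforms associated to a single overlap are genuinely different transforms. Carrying this matching out rigorously---and pinpointing exactly where the angular hypothesis $|\theta_\J|\geq\pi/2$ is used essentially, rather than merely conveniently---is where I expect the bulk of the technical effort to lie.
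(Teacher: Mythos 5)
Your first ingredient is exactly the paper's: diagonalize the transverse operator $A_\J$ by a generalized Fourier transform, observe that the guided modes (negative eigenvalues) and the travelling part of the continuous spectrum ($\lambda\le 0$) are incompatible with $u(\cdot,\y_\J)\in L^2$, and obtain the evanescent half-plane representation $u=\sum_\pm\int_{\R^+}\Hphi_\J^\pm(\lambda)\Psi_\J^\pm(\lambda,\x_\J)\e^{-\sqrt{\lambda}\,\y_\J}\rho_\J^\pm(\lambda)\,\d\lambda$. But the concluding step --- showing the remaining spectral densities $\Hphi_\J^\pm$ on $(0,+\infty)$ vanish --- is where your proposal has a genuine gap. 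The mechanism you suggest (equate two representations of $u$ on an overlap, argue that simultaneous exponential decay in two non-grazing longitudinal directions is ``incompatible'') does not work: the representation superposes modes with decay rates $\sqrt{\lambda}$ for all $\lambda>0$, which are arbitrarily slow as $\lambda\to 0^+$, so $u$ need not decay exponentially in any direction, and two valid representations coexisting on an overlap produce no contradiction by themselves. Moreover the object whose analyticity matters is not (only) the generalized eigenfunctions $\Psi^\pm(\lambda,\x)$ but the boundary spectral density $\lambda\mapsto\Hphi_\N^\pm(\lambda)$ itself, which your proposal never analytically continues.

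The paper's actual argument is the following. Since $\Hphi_\N^\pm$ already vanishes on the real interval $(-\k_{\N,\pm}^2,0)$ (the travelling range), it suffices to show it extends to a function analytic (or meromorphic) on a connected complex neighbourhood of $(-\eta,+\infty)$; the isolated-zeros principle then forces $\Hphi_\N^\pm=0$ on $(0,+\infty)$, hence $u=0$ in $\rmH_\N$ and, by unique continuation, in $\Omega$. The continuation is obtained by splitting $\Hphi_\N^\pm(\lambda)=\int_\R\varphi_\N(x)\overline{\Psi_\N^\pm(\lambda,x)}\,\d x$ into a bounded middle piece (entire by Morera) and two tails; the overlaps are used not to ``match'' representations but to substitute the west/east half-plane representations into these tails, after which Fubini and an explicit exponential integral produce expressions like \cref{eq: Tphi NW final} whose denominators $\sqrt{\mu}\sin\theta_\W\pm\i\beta_\N^-(\mu)\cos\theta_\W+\i\beta_\N^-(\lambda)$ must be shown not to vanish near the positive real $\lambda$-axis --- and this is precisely where $|\theta_\J|\ge\pi/2$ (i.e.\ $\cos\theta_\W\le 0$) is used essentially, the zero set being a curve touching $\R$ only at a negative abscissa. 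Two further points you would need and do not address: the trace must be taken on a line $\Sigma_\J$ strictly inside $\rmH_\J$ to have a well-defined continuous trace, and the integrability of $\varphi_\N$ on the tails (needed to define $\Hphi_\N^\pm$ in the Lebesgue sense and to apply Fubini) comes from the weighted estimate $\lambda^{-1/2}\Hphi_\W^\pm\in L^1(\R^+,\rho_\W^\pm)$, i.e.\ \cref{cor: HPR}.
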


\Cref{Theorem: main} can be deduced from existing results in some particular cases which are illustrated in \cref{fig:examples-known}. The simplest situation is the case where $k$ is constant in the whole domain $\Omega$. In the general angle configuration (case (a) of \cref{fig:examples-known}),  \cref{eq Helmholtz} implies that $u$ satisfies the Helmholtz equation outside a disk, then the well-known Rellich's uniqueness theorem \cite{Rellich-1943} tells us that $u$ vanishes outside this disk, hence also in $\Omega$ according to the unique continuation principle. In the right-angle configuration (case (b) of \cref{fig:examples-known}), Rellich's theorem no longer applies, but one can use instead the uniqueness theorem in \cite{Bonnet-Fliss-Hazard-Tonnoir-2011}, which is exactly the statement of \cref{Theorem: main} when $k$ is constant and $\Omega$ is a conical domain with vertex angle greater than $\pi$, that is,
$\Omega := \{(x,y) \in \R^2 \mid y > -|x| \tan\theta \}$ with $\theta \in (0,\pi/2)$. The latter result (together with the unique continuation principle) also yields \cref{Theorem: main} for some cases of non-constant $k$, as soon as $k$ remains constant in a conical subdomain of $\Omega$ with vertex angle greater than $\pi$. Such a condition is satisfied for instance in the general angle configuration when $k$ is constant in one of the three half-planes (case (c) of \cref{fig:examples-known}), or in $\rmH_\W \cup \rmH_\N$ (or $\rmH_\E \cup \rmH_\N$) in the right-angle configuration (case (d) of \cref{fig:examples-known}).

\begin{figure}[t]
	\centering
\includegraphics{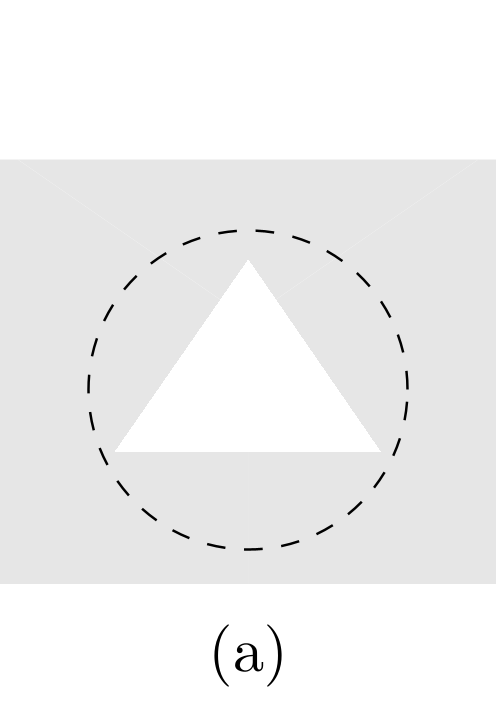}
\hspace{.02\columnwidth}
\includegraphics{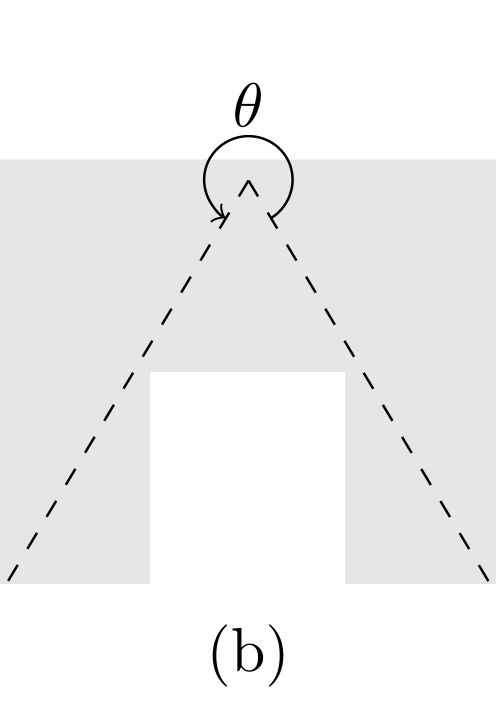}
	\hspace{.02\columnwidth}
	\centering\centering\includegraphics{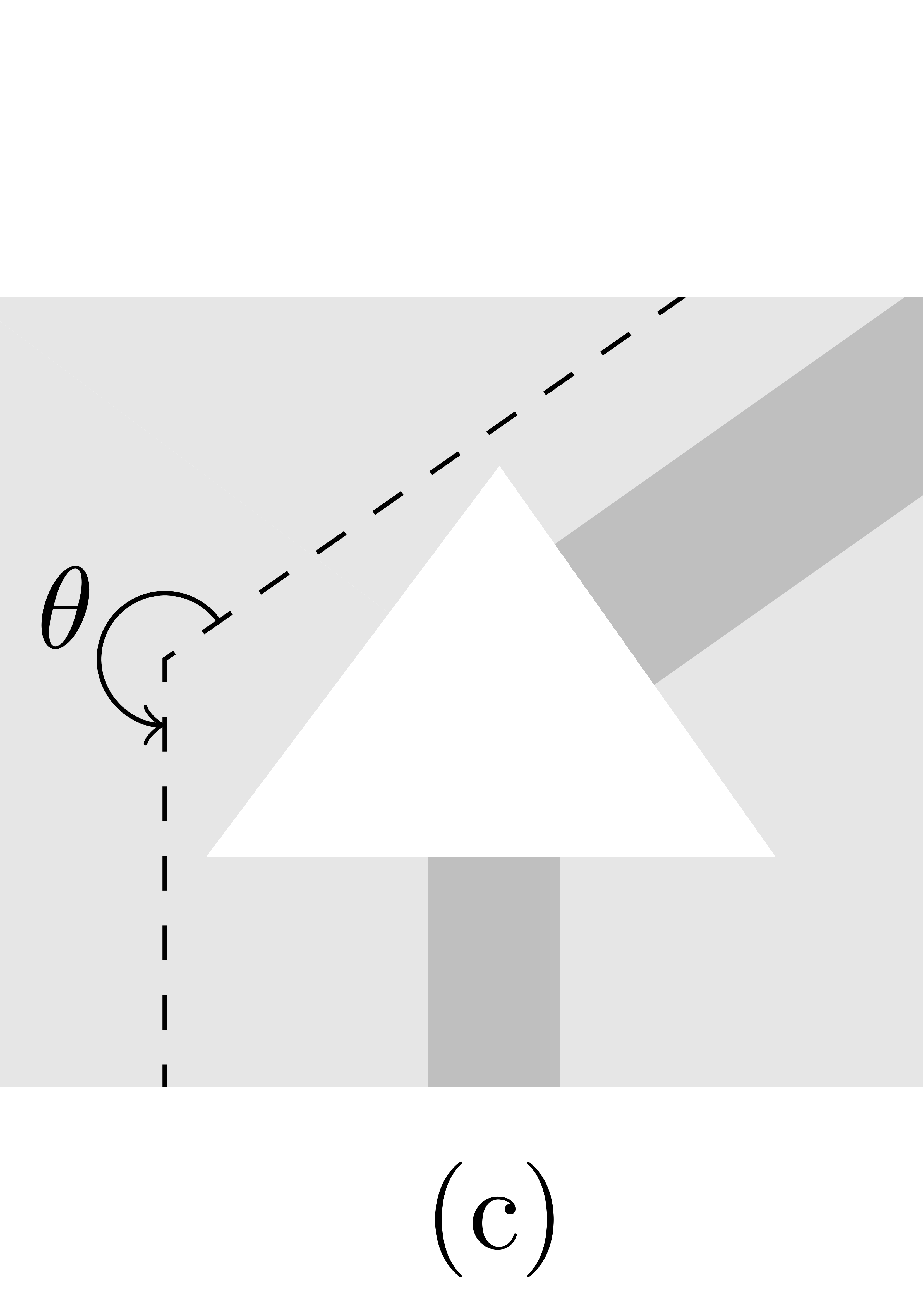}
		\hspace{.02\columnwidth}
	\centering
	\includegraphics{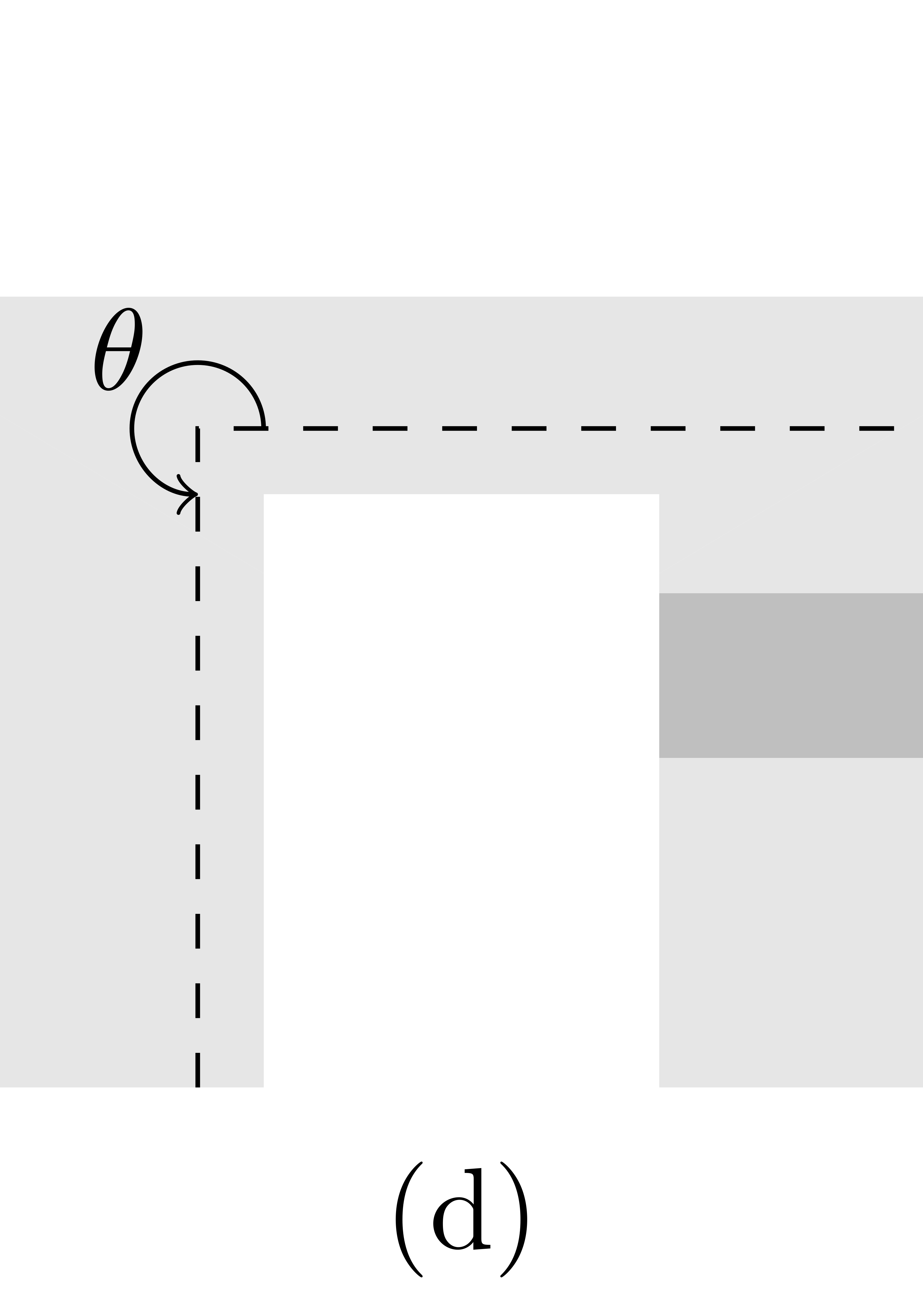}
	\caption{Examples of situations where \cref{Theorem: main} can be deduced from existing results.} 
	\label{fig:examples-known}
\end{figure}

Let us point out that no boundary condition is required in \cref{Theorem: main}, nor in the uniqueness results \cite{Bonnet-Fliss-Hazard-Tonnoir-2011,Rellich-1943} mentioned above. Indeed, these results express that in some particular unbounded domains, the $L^2$ framework prohibits the existence of non-trivial solutions to the Helmholtz equation. In other words, whatever the content of $\R^2 \setminus \Omega$ (i.e., the white semi-infinite strip or triangle of \cref{fig:two-config}), \cref{Theorem: main} applies.  \Cref{fig:examples-new} shows some possible examples covered by \Cref{Theorem: main}: case (a) shows a junction of three stratified media, whereas case (b) represents a more exotic junction of three stratified media together with a periodic medium. 
From a physical point of view, \Cref{Theorem: main} ensures the absence of \emph{trapped modes}, that is, localized vibrations of an unbounded medium (provided the unique continuation principle in $\R^2 \setminus \Omega$ holds true). From a spectral point of view, if we consider the selfadjoint operator which describes the dynamics of such a junction, the absence of trapped modes amounts to the absence of eigenvalues embedded in the essential spectrum $[0,+\infty)$ of this operator. 

The possible existence of trapped modes at the junction of open waveguides initially motivated this paper. By \emph{open waveguide}, we mean a stratified medium which is unbounded in the transverse direction and such that time-harmonic waves can propagate without attenuation in the longitudinal direction while their energy remains localized in the transverse direction, near the \emph{core} of the waveguide (the dark gray areas in \cref{fig:two-config}). It is known that trapped modes do not exist in uniform straight open waveguides, nor for local perturbations of such waveguides  \cite{Bonnet-Dakhia-Hazard-Chorfi-2009,DeBievre-Pravica-1992,Hazard-2015,Weder-1988,Weder-1991}, nor in straight junctions of such waveguides \cite{Bonnet-Goursaud-Hazard-2011}. Thanks to \cite{Bonnet-Fliss-Hazard-Tonnoir-2011}, we also know that such modes do not exist in curved open waveguides either, provided that the core of the waveguides and all other possible inhomogeneities can be encased in a cone of opening angle smaller than $\pi$ (as in case (c) of \cref{fig:examples-known}). The present paper provides other configurations where this result holds true. In short, \cref{Theorem: main} implies the absence of trapped modes at the junction of open waveguides, provided the angles between branches are all greater or equal to $\pi/2$.

The above mentioned results, as well as others in the same vein (see e.g. \cite{Vesalainen-2014}) could suggest a more general result stating the absence of trapped modes in any unbounded domain. This is not true. Such trapped modes are known to occur in \emph{closed waveguides}, i.e., unbounded tubes with bounded transverse section, about which there exists an abundant literature (see for instance the review papers \cite{Linton-McIver-2007, Pagneux-2013}). We also mention the recent result of \cite{Krejcirik-Lotoreichik-2024} which proves the existence of an embedded eigenvalue for a so-called quasi-conical domain.
\begin{figure}[t]
	\centering
	\includegraphics[height=3.8cm]{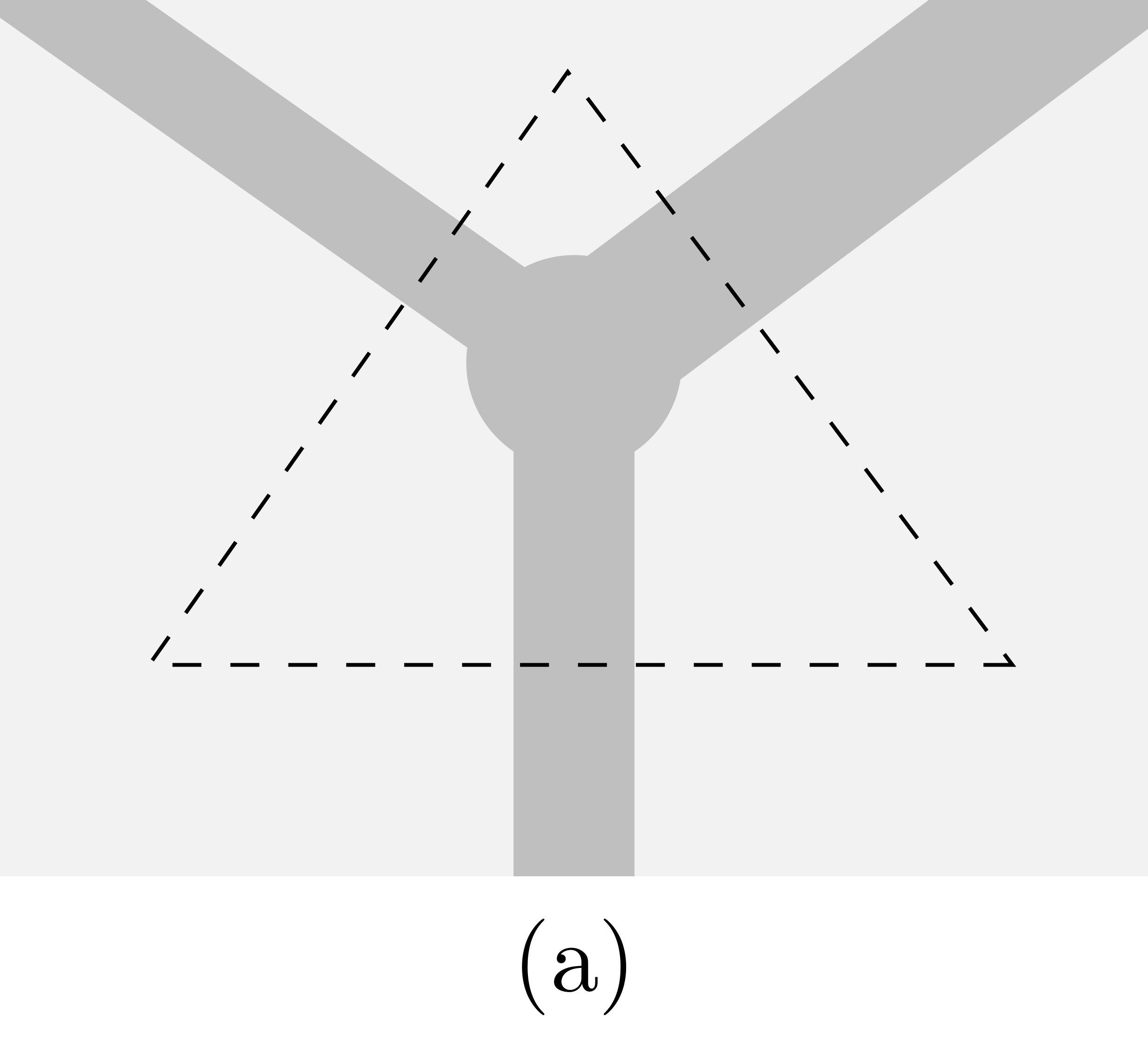}
		\centering
		\hspace{.05\columnwidth}
		\centering
		\includegraphics[height=3.8cm]{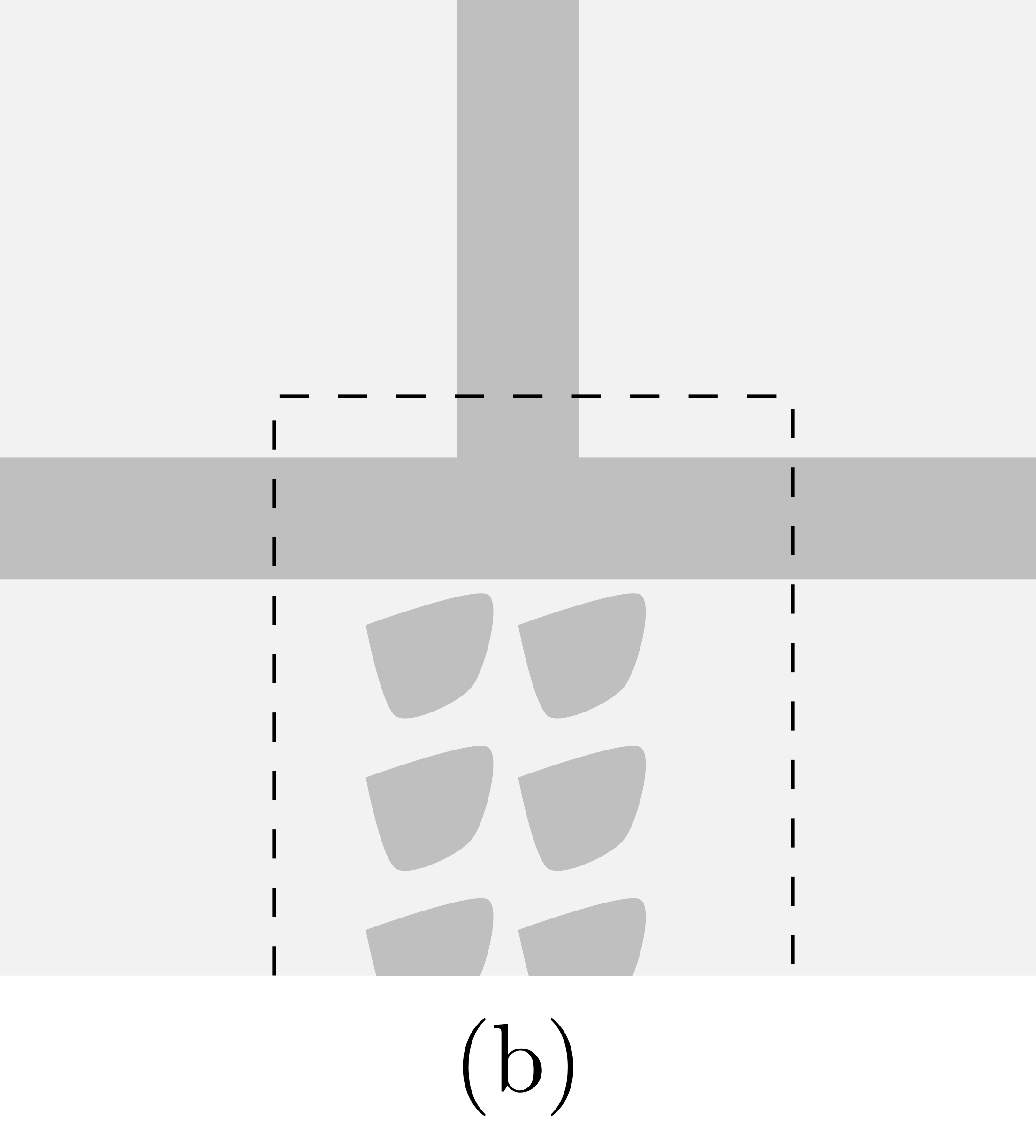}
		\caption{Examples of situations where \cref{Theorem: main} applies.} 
		\label{fig:examples-new}
\end{figure}

The proof of \cref{Theorem: main} follows exactly the same lines as in  \cite{Bonnet-Fliss-Hazard-Tonnoir-2011}. The key idea, exploited also in \cite{Bonnet-Dakhia-Hazard-Chorfi-2009,Bonnet-Goursaud-Hazard-2011,Hazard-2015}, dates back to the pioneering work of \cite{Weder-1988}. There are two main ingredients in the proof. 

{\rm (i)} On the one hand, we use a \emph{half-plane representation} of $u$, whose construction relies on separation of local space coordinates\footnote{Rellich's uniqueness theorem \cite{Rellich-1943} also relies on separation of variables. But the separation is made with polar (or spherical) coordinates instead of Cartesian coordinates. Let us mention that even in the very particular sub-case of our problem where $k$ only depends on the polar angle (which may happen in the situations considered in \cref{sec: two-layer}), separation of polar coordinates cannot be used to prove \cref*{Theorem: main}.} in each of the three half-planes. In a homogeneous medium, the usual Fourier transform along the transverse $\x$-direction provides such a representation, for it diagonalizes the transverse component of the Helmholtz operator, that is, $-\partial^2_\x$. This is no longer possible in a stratified medium, since we cannot express conveniently the usual Fourier transform of $k^2u$ if $k^2$ is not constant. Fortunately we can construct a \emph{generalized Fourier transform} which diagonalizes $-\partial^2_\x-k^2$ for non-constant $k^2$ and then yields a half-plane representation (see \cite{Ott-2017,Santosa-Magnanini-2001} for examples of the use of the generalized Fourier transform in open waveguides). From a physical point of view, the latter representation is interpreted as a modal decomposition of $u$ on a continuous family of modes of the stratified medium. There are two categories of modes depending on their behaviour in the longitudinal direction : evanescent or travelling. Since we are concerned with localized (square-integrable) solutions, the modal components of $u$ associated with travelling modes must vanish. 

{\rm (ii)} On the other hand, the second ingredient of the proof is an \emph{analyticity argument} with respect to the Fourier variable, which ensures that the modal components extend to an analytic function of the Fourier variable. Thanks to the isolated zeros principle, the fact that the travelling components of $u$ vanish implies that the evanescent ones also vanish, hence that $u=0$, which completes the uniqueness proof. 

The paper is organized as follows. We start in \cref{section : homogeneous case} by recalling the sketch of the proof given in \cite{Bonnet-Fliss-Hazard-Tonnoir-2011} for a homogeneous medium in the right-angle configuration. Then, for the sake of clarity, we focus on a particular case of stratified media composed of only two layers, for which more explicit calculations can be made. \Cref{sec: HPR two-layered} is devoted to the half-plane representation in such a two-layered medium, after the presentation of the generalized Fourier transform in this context. In \cref{sec: two-layer}, we give the proof of \cref{Theorem: main} for a junction of two-layered media, considering successively the right-angle and the general angle configurations. The case of general stratified media is finally dealt with in \cref{section : general case}.

\subsection{Sketch of the proof for a homogeneous medium}\label{section : homogeneous case}
As our proof of \cref{Theorem: main} is a generalization of the one presented in \cite{Bonnet-Fliss-Hazard-Tonnoir-2011} in the case of a homogeneous medium, we begin here by recalling the basic ideas of the latter proof. For the sake of simplicity, we restrict ourselves to the right-angle configuration, i.e., $(\theta_\W,\theta_\E)=(+\pi/2,-\pi/2)$, see \cref{fig:two-config} (strictly speaking, this case is not considered in \cite{Bonnet-Fliss-Hazard-Tonnoir-2011}, but the proof is easily adapted). For greater clarity, we omit most of the technical details concerning the functional framework.

Hence, in this subsection, function $k(x,y)$ is constant, simply denoted $k$.

\textbf{First step}. The core ingredient of the proof is a half-plane representation of a function $u\in L^2(\rmH)$ which satisfies
\begin{equation*}
	-\Delta u -  k ^2 u = 0 \quad\text{in } \rmH,
\end{equation*}
where $\rmH$ is the half-plane defined in \cref{def half plane}. This representation is obtained by separation of space variables, thanks to the usual Fourier transform in the $\x$-direction, defined by
\begin{equation}
	\forall\xi\in\R,\quad F f(\xi) := \int_\R f(\x)\,\e^{-\i \xi \x}\,\d\x.
	\label{def: usual Fourier}
\end{equation}
Let $\Tu$ denote the partial Fourier transform of $u$  in the $\x$-direction, that is,   
$\Tu(\xi,\y) := F[u(\cdot,\y)](\xi)$. Applying $F$ to the Helmholtz equation shows that for (almost) every $\xi \in \R$,
\begin{equation*}
	-\partial_\y^2 \Tu(\xi,\cdot) - ( k^2-\xi^2)\, \Tu(\xi,\cdot) = 0 
	\quad\text{in }(0,+\infty).\label{HP problem hom four}
\end{equation*}
Thus there exist some functions $A(\xi)$ and $B(\xi)$ such that
\begin{equation*}
	\Tu(\xi,\y)=A(\xi)\,\e^{-\sqrt{\xi^2- k^2}\,\y} + B(\xi)\,\e^{\sqrt{\xi^2- k^2}\,\y},
\end{equation*}
where we use for instance the convention $\sqrt z := \i\sqrt{-z}$ if $z<0$. As $F$ is unitary from $L^2(\R_\x)$ to  $L^2(\R_\xi)$, we infer from the initial assumption $u\in L^2(\rmH)$ that $\Tu\in L^2(\R_\xi\times(0,+\infty))$, which implies that 
for (almost) every $\xi \in \R$, $\Tu(\xi,\cdot)$ belongs to $L^2(0,+\infty)$. As a consequence, we have $A(\xi)=0$ for $|\xi|< k $ and $B(\xi)=0$ for $\xi\in\R$, which yields 
\begin{equation}
	\forall (\xi,\y) \in \R\times[0,+\infty), \quad
	\Tu(\xi,\y)=
	\left\{ \begin{array}{ll} 
		0 & \text{if } |\xi|< k, \\
		\Tu(\xi,0)\,\e^{-\sqrt{\xi^2- k^2}\,\y} & \text{if } |\xi| > k.
	\end{array}\right.
	\label{eq: hom rep Four}
\end{equation}
Using the inverse Fourier transform, we finally obtain the so-called half-plane representation of $u$, which provides an expression of $u$ at any point of $\rmH$ from the Fourier transform of its trace on the boundary of $\rmH$:
\begin{equation}
	\forall (\x,\y)\in \rmH,\quad u(\x,\y) = \frac{1}{2\pi} \int_{|\xi|>k } \e^{\i \xi \x} \, \e^{-\sqrt{\xi^2- k^2}\,\y}\,\Tu(\xi,0)\ \d \xi. \label{cas hom/ rep demi plan}
\end{equation}

\textbf{Second step}. Let us show now how to use this representation to prove \cref{Theorem: main} in the right-angle configuration, i.e., when the relations between the local and global coordinate systems \cref{eq: coord locales} simplify as
\begin{equation}
	\begin{pmatrix} \x_\N \\ \y_\N \end{pmatrix} 
	= \begin{pmatrix}x \\ y\end{pmatrix},
	\quad
	\begin{pmatrix} \x_\W \\ \y_\W \end{pmatrix} 
	= \begin{pmatrix}y-b_\W \\ -x+a_\W \end{pmatrix}
	\quad\text{and}\quad
	\begin{pmatrix} \x_\E \\ \y_\E \end{pmatrix} 
	= \begin{pmatrix}-y+b_\E \\ x-a_\E \end{pmatrix}.
	\label{eq: coord locales right angle}
\end{equation}
Suppose that $u \in L^2(\Omega)$ satisfies the Helmholtz equation in $\Omega := \rmH_\W \cup \rmH_\N \cup \rmH_\E$. Hence, we can use the half-plane representation \cref{cas hom/ rep demi plan} for the restriction of $u$ in each of the three half-planes. For $\J \in \{\W,\N,\E\}$, we denote by  $\varphi_\J$ the trace of $u$ on $\partial \rmH_\J$, considered as a function of the local coordinate $\x_\J$, and by $\Tphi_\J$ its Fourier transform with respect to $\x_\J$. We know from \cref{eq: hom rep Four} for $\y = 0$ that $\Tphi_\J(\xi) = 0$ if $|\xi|<k$.

In order to prove that $u=0$, the key argument consists in proving that $\Tphi_\N(\xi)$ extends to an analytic function of $\xi$ in a complex vicinity of the real axis. As $\Tphi_\N(\xi)$ vanishes on the interval $(-k,+k),$ analyticity implies that it vanishes on the whole real axis. The half-plane representation in $\rmH_\N$ then tells us that $u$ vanishes in $\rmH_\N$, so finally also in the whole domain $\Omega$ by virtue of the unique continuation principle.

It remains to prove the analyticity of $\Tphi_\N(\xi)$. To do so, we start from the definition of $\Tphi_\N$ (recall that $\x_\N=x$),
\begin{equation*}
	\Tphi_\N(\xi):=\int_{\R} u(x,0)\,\e^{-\i\xi x}\,\d x
\end{equation*}
and split the integral in three parts by setting 
\begin{equation*}
	\Tphi_\N(\xi)=
	\underbrace{\int_{-\infty}^{a_\W} u(x,0)\,\e^{-\i\xi x}\,\d x}_{\displaystyle =: \Tphi_{\N,\W}(\xi)}
	+ \underbrace{\int_{a_\W}^{a_\E} u(x,0)\,\e^{-\i\xi x}\,\d x}_{\displaystyle =: \Tphi_{\N,0}(\xi)}
	+ \underbrace{\int_{a_\E}^{+\infty} u(x,0)\,\e^{-\i\xi x}\,\d x}_{\displaystyle =: \Tphi_{\N,\E}(\xi)}.
\end{equation*}
Thanks to Morera's theorem\footnote{Morera's theorem tells us that if $f$ is a continuous complex-valued function defined in a given open disk $D$ of the complex plane, such that $\int_T f(\xi) \,\d\xi = 0$ for every triangular path $T \subset D$, then $f$ is analytic in $D$. To apply it to $\Tphi_{\N,0}(\xi)$, we simply use Fubini's theorem.}, we see that the second integral $\Tphi_{\N,0}(\xi)$ extends to an entire function of $\xi$, since $x$ lies in a bounded interval (so that $\e^{-\i\xi x}$ remains bounded for $\xi$ in any bounded complex domain). To deal with $\Tphi_{\N,\W}(\xi)$ and $\Tphi_{\N,\E}(\xi)$, we use the half-plane representation in $\rmH_\W$ and $\rmH_\E$ respectively. As both cases are similar, we focus on $\Tphi_{\N,\W}(\xi)$. Using \cref{eq: coord locales right angle}, the half-plane representation \cref{cas hom/ rep demi plan} becomes
\begin{equation*}
	\forall (x,y)\in \rmH_W,\quad u(x,y) = \frac{1}{2\pi} \int_{|\eta|>k } \e^{\i \eta (y-b_\W)} \, \e^{-\sqrt{\eta^2- k ^2}(-x+a_\W)}\,\Tphi_\W(\eta)\, \d \eta.
\end{equation*}
Taking $y=0$, we obtain
\begin{equation*}
	\Tphi_{\N,\W}(\xi) = 
	\frac{1}{2\pi} \int_{-\infty}^{a_\W} \left( \int_{|\eta|>k } \e^{-\i \eta b_\W} \, \e^{-\sqrt{\eta^2- k ^2}(-x+a_\W)}\,\Tphi_\W(\eta)\, \d \eta \right) \e^{-\i\xi x}\,\d x.
\end{equation*}
Fubini's theorem then yields
\begin{align*}
	\Tphi_{\N,\W}(\xi) 
	& = \frac{\e^{-\i\xi a_\W}}{2\pi} \int_{|\eta|>k } 
	\left( \int_{-\infty}^{a_\W} \e^{(\sqrt{\eta^2- k ^2}-\i\xi)(x-a_\W)}\, \d x \right) 
	\e^{-\i \eta b_\W}\,\Tphi_\W(\eta) \,\d\eta \\
	& = \frac{\e^{-\i\xi a_\W}}{2\pi} \int_{|\eta|>k }
	\frac{\e^{-\i \eta b_\W}\,\Tphi_\W(\eta)}{\sqrt{\eta^2- k ^2}-\i\xi} \,\d\eta.
\end{align*}
Using again Morera's theorem, we conclude that the latter integral extends to an analytic function of $\xi$ in $\C\setminus \i\R$ (since the denominator does not vanish). Proceeding in a similar way on the east side, we have finally proved that $\Tphi_\N(\xi)$ extends to an analytic function of $\xi$ in $\C\setminus \i\R$, which completes the proof.

\begin{remark}
	To ensure that the above approach is rigorous, we need in particular to justify that all the integrals we have used are well defined in the Lebesgue sense. Among other things, we must have $\Tphi_\J \in L^1(\R)$ for each $\J \in \{\W,\N,\E\}$. This is not true in general, since the hypotheses of \cref{Theorem: main} do not provide any regularity assumption on the $\varphi_\J$'s. This explains why in the sequel, we will use, for the half-plane representations, traces on lines which are contained in the interior of $\Omega$ instead of traces on the boundaries of the half-planes (see \cref{ssec: deriving HPR}).
	\label{rem:funct-details}
\end{remark}

\section{Fourier half-plane representation for a two-layered medium}
\label{sec: HPR two-layered}
In this section and the next one, we are concerned with the simplest case of layered media, composed of only two layers. We therefore consider a wavenumber function $\k$ given by
\begin{equation}
	\k(\x) := \left\{\begin{array}{ll}
		\km & \text{if } \x<0,\\
		\kp & \text{if } \x > 0,
	\end{array}\right.
	\label{def k dioptre}
\end{equation} 
where $\km$ and $\kp$ are positive constants. The next section provides the proof of \cref{Theorem: main} for a junction of such media. The present section focuses on the core ingredient of the proof, that is, the extension of the half-plane representation \cref{cas hom/ rep demi plan} to a two-layered medium. This requires us to introduce a \emph{generalized Fourier transform} which diagonalizes the $\x$-dependent part of the Helmholtz operator, more precisely the unbounded selfadjoint operator $A:\rmD(A)\subset L^2(\R)\longrightarrow L^2(\R)$ defined by
\begin{equation}
	\forall \Psi \in \rmD(A) := H^2(\R),\quad A\Psi:=-\partial^2_\x \Psi -\k^2\,\Psi.  
	\label{def: A}   
\end{equation}
We will see that this transform can be interpreted as an operator of ``decomposition'' on a family of \emph{generalized eigenfunctions} of $A$ which are introduced below.

\subsection{Generalized eigenfunctions}
As $A$ is selfadjoint, its spectrum is necessarily real. Its eigenelements are thus pairs $(\lambda,\Psi) \in \R \times H^2(\R)$ such that $A\Psi = \lambda\,\Psi$, that is,
\begin{equation}
	-\partial_\x^2\Psi -\k^2\,\Psi = \lambda\,\Psi 
	\quad \text{in }\R.
	\label{eq gen eigenfunctions}
\end{equation}
This equation can also be derived from the Helmholtz equation by separation of variables, i.e., by searching for solutions to $-\Delta u - \k^2\,u = 0$ in the form $u(\x,\y) = \Psi(\x)\,\e^{\pm\sqrt{\lambda}\y}$. The latter behavior in the $\y$-direction will be implicitly understood below in the physical interpretation of the generalized eigenfunctions.

It is easily seen that with the assumption $\Psi \in H^2(\R)$, the only possible solution to \cref{eq gen eigenfunctions} is $\Psi = 0$ for any $\lambda \in \R$, which shows that the point spectrum of $A$ is empty. However, if we forget this assumption and search for \emph{bounded} solutions, the dimension of the space of bounded solutions is 0 if $\lambda < -\max(\km^2,\kp^2)$, 1 if $-\max(\km^2,\kp^2) \leq \lambda < -\min(\km^2,\kp^2)$ and 2 if $\lambda \geq -\min(\km^2,\kp^2)$. We introduce a particular basis of this space. Denoting
\begin{equation}
	\Lambda^\pm := (-\kpm^2,+\infty),
	\label{eq:def Lambda_pm}
\end{equation}
we consider two functions $\Psi^\pm : \Lambda^\pm\times\R \longrightarrow \C$ defined for all $(\lambda,\x) \in \Lambda^\pm\times\R$ by
\begin{equation}
	\Psi^\pm(\lambda,\x) := \left\{\begin{array}{ll}
		\e^{\mp\i\beta^\pm(\lambda)\,\x} + R^\pm(\lambda)\,\e^{\pm\i\beta^\pm(\lambda)\,\x} &\text{if } \pm \x >0, \\[1mm]
		T^\pm(\lambda)\,\e^{\mp\i\beta^\mp(\lambda)\,\x} & \text{if }\mp \x>0,
	\end{array}\right.
	\label{eq: def fpg} 
\end{equation}
where
\begin{align}
	\beta^\pm(\lambda) &:= \left\{\begin{array}{ll}
		\sqrt{\lambda+\kpm^2} &\text{if } \lambda\geq-\kpm^2,\\[1mm]
		\i\sqrt{-\lambda-\kpm^2}&\text{if } \lambda<-\kpm^2,
	\end{array}\right. 
	\label{eq: def beta}
	\\[1mm]
	R^\pm(\lambda) &:= \frac{\beta^\pm(\lambda)-\beta^\mp(\lambda)}{\beta^+(\lambda)+\beta^-(\lambda)} \quad\text{and}\quad
	T^\pm(\lambda) := \frac{2\beta^\pm(\lambda)}{\beta^+(\lambda)+\beta^-(\lambda)}.
	\label{eq: coeff refl transm}
\end{align}

For a given $\lambda$, $\Psi^\pm(\lambda,\cdot)$ is called a \emph{generalized eigenfunction} of $A$, because it is a solution to the eigenvalue equation \cref{eq gen eigenfunctions}, but it does not belong to $\rmD(A)$ (nor to $L^2(\R)$), which justifies the word \emph{generalized}. This function can be interpreted as the response of the two-layered medium to an incident wave with unit amplitude and wavenumber $\beta^\pm(\lambda)$. In the notation $\Psi^\pm$, the sign indicates the side where the incident wave lies, i.e., $\x>0$ for $+$ and $\x<0$ for $-$. The behaviour of $\Psi^\pm(\lambda,\cdot)$ depends on whether $\beta^\pm(\lambda)$ is real or imaginary. To fix ideas, let us assume for instance that  $\km > \kp$, i.e., $-\km^2 < -\kp^2$.

First consider $\Psi^+(\lambda,\cdot)$ for a given $\lambda \in \Lambda^+ := (-\kp^2,+\infty)$. Noticing that both $\beta^-(\lambda)$ and $\beta^+(\lambda)$ are real, we see that function $\Psi^+(\lambda,\cdot)$ can be written more explicitly as  
\begin{equation*}
	\Psi^+(\lambda,\x) =  \left\{\begin{array}{ll}
		\displaystyle \e^{-\i\sqrt{\lambda+\kp^2}\,\x}
		+ R^+(\lambda)\,\e^{\i\sqrt{\lambda+\kp^2}\,\x} & \text{if } \x>0, \\[1mm]
		\displaystyle T^+(\lambda)\,\e^{-\i\sqrt{\lambda+\km^2}\,\x} & \text{if }\x<0.		
	\end{array} \right.
\end{equation*}
Hence, on the $+$ side, it represents the superposition of an incident wave which propagates from $+\infty$ and a reflected wave which propagates back towards $+\infty$ and whose amplitude is given by the \emph{reflection coefficient} $R^+(\lambda)$, which is real. On the $-$ side, it becomes a transmitted wave which propagates towards $-\infty$ and whose amplitude is given by the \emph{transmission coefficient} $T^+(\lambda)$, which is also real.

Consider now $\Psi^-(\lambda,\cdot)$ for a given $\lambda \in \Lambda^- := (-\km^2,+\infty)$. Two situations occur. On the one hand, if $\lambda > -\kp^2$, as both $\beta^-(\lambda)$ and $\beta^+(\lambda)$ are real, the interpretation of  $\Psi^-(\lambda,\cdot)$ is the same as above  by swapping the roles of both sides. On the other hand, if $\lambda \in (-\km^2,-\kp^2)$, then $\beta^-(\lambda)$ is still real, but $\beta^+(\lambda)$ becomes imaginary, so that
\begin{equation*}
	\Psi^-(\lambda,\x) =  \left\{\begin{array}{ll}
		\displaystyle \e^{\i\sqrt{\lambda+\km^2}\,\x}
		+ R^-(\lambda)\,\e^{-\i\sqrt{\lambda+\km^2}\,\x} & \text{if } \x<0, \\[1mm]
		\displaystyle T^-(\lambda)\,\e^{-\sqrt{-\lambda-\kp^2}\,\x} & \text{if }\x>0.		
	\end{array} \right.
\end{equation*}
Hence, on the $-$ side, it represents the superposition of an incident wave which propagates from $-\infty$ and a reflected wave with a reflection coefficient $R^-(\lambda)$ which is now a complex number such that $|R^-(\lambda)| = 1$. On the $+$ side, the transmitted wave is now evanescent. This case corresponds to the well-known phenomenon of total reflection.

These comments are summarized in \cref{fig: behavior fpg}. Of course, similar interpretations hold in the case where $\km < \kp$, swapping the roles of both sides.

\begin{figure}[t]
	\centering
	\begin{tabular}{[L{2.7cm}"M{4cm}"M{4cm}]}
		\thickcline{2-3}
		\multicolumn{1}{c"}{} &\vspace{.1cm}$\Psi^+(\lambda,\cdot)$ &\vspace{.1cm} $\Psi^-(\lambda,\cdot)$\\
		\thickhline
		\!$\lambda\in(-\kp^2,+\infty)$&\vspace{.2cm}
		{\centering\includegraphics{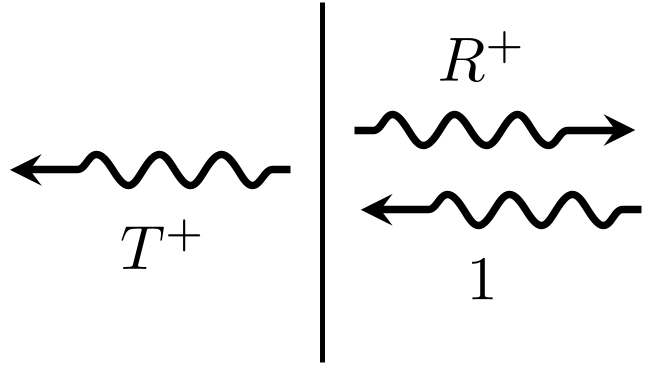}} &
		{\centering\includegraphics{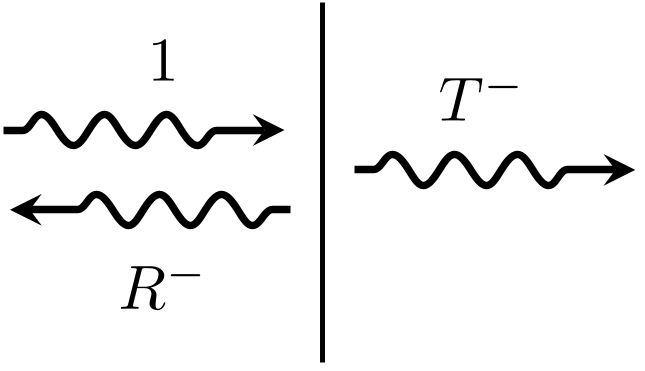}} 
		\\ \thickhline
		\!$\lambda\in(-\km^2,-\kp^2)$ &\vspace{.2cm} Undefined.
		&\vspace{.2cm}
		{\centering\includegraphics{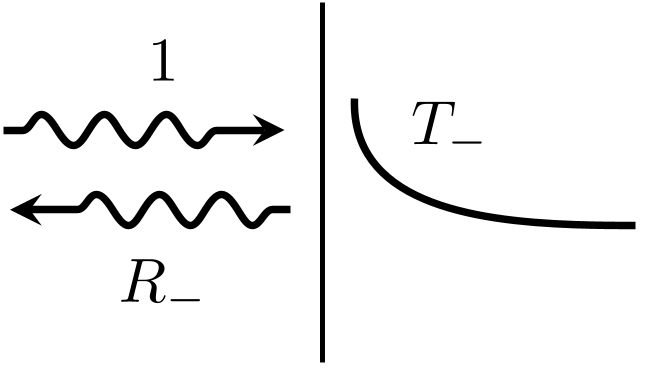}}  \\
		\thickhline
	\end{tabular}
	\caption{Behavior of the generalized eigenfunctions in the case where $\km > \kp$.}
	\label{fig: behavior fpg}
\end{figure}

The following proposition gathers some properties which are needed in the sequel.
\begin{proposition}
	{\rm (i)} The generalized eigenfunctions are bounded, in the sense that
	\begin{equation}
		\forall (\lambda,\x)\in\Lambda^\pm\times\R,\quad
		|\Psi^\pm(\lambda,\x)|\leq2.
		\label{bound Psi}
	\end{equation}
	
	{\rm (ii)} Consider \begin{equation}\label{eq def D}
		\mathbb{D} := \C \setminus (-\infty,-\min(\k_-^2,\k_+^2)].
	\end{equation} Each of the following functions has an analytic continuation in $\mathbb{D}$: $\beta^\pm(\lambda)$, $R^\pm(\lambda)$, $T^\pm(\lambda)$, as well as $\Psi^\pm(\lambda,\x)$  for any fixed $\x \in \R$. Moreover the analytic continuations of $\Psi^\pm(\lambda,\x)$ are bounded in any compact set of $\mathbb{D} \times \R$.
	\label{prop: fpg}
\end{proposition}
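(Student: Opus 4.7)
The plan is to treat (i) and (ii) by elementary arguments based directly on the explicit formulas \cref{eq: def fpg}--\cref{eq: coeff refl transm}. For (i), I would argue by cases according to the sign of $\x$ and the nature of $\beta^\mp(\lambda)$. By construction, $\beta^\pm(\lambda)$ is real and positive throughout $\Lambda^\pm$. When $\beta^\mp(\lambda)$ is also real and positive, the formulas \cref{eq: coeff refl transm} give $|R^\pm(\lambda)| < 1$ and $0 < T^\pm(\lambda) < 2$, and since all exponentials $\e^{\pm\i\beta^\pm(\lambda)\,\x}$ have modulus one, the bound $|\Psi^\pm(\lambda,\x)| \leq 2$ follows immediately. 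In the remaining case where $\beta^\mp(\lambda) = \i\gamma$ is purely imaginary with $\gamma > 0$, a direct computation gives $|R^\pm(\lambda)| = 1$, so the bound $|\Psi^\pm| \leq 2$ still holds on the incident side; on the transmitted side, $|T^\pm(\lambda)| = 2\beta^\pm/|\beta^\pm + \i\gamma| \leq 2$, and the transmitted exponential $|\e^{\mp\i\beta^\mp\x}| = \e^{-\gamma|\x|}$ is bounded by one because $\mp\x > 0$ there.

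For (ii), I would use the principal branch of the square root, analytic on $\C\setminus(-\infty,0]$ with positive real part off the cut, to extend $\beta^\pm(\lambda) = \sqrt{\lambda + \kpm^2}$ holomorphically to $\C\setminus(-\infty,-\kpm^2]$. Since $(-\infty,-\min(\km^2,\kp^2)]$ contains the union of these two cuts, both $\beta^+$ and $\beta^-$ are analytic on $\mathbb{D}$. The key point is that for every $\lambda \in \mathbb{D}$, one has $\lambda + \kpm^2 \notin (-\infty,0]$, hence $\Re\beta^\pm(\lambda) > 0$ and in particular $\beta^+(\lambda)+\beta^-(\lambda)\neq 0$. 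Consequently $R^\pm$ and $T^\pm$, as rational functions of $\beta^\pm$ with non-vanishing denominator, are analytic on $\mathbb{D}$; so is the analytic extension of $\Psi^\pm(\lambda,\x)$ at fixed $\x$, obtained by substituting these extensions into \cref{eq: def fpg}---no ambiguity at $\x=0$ arises, thanks to the algebraic identity $1+R^\pm = T^\pm$ that can be read off \cref{eq: coeff refl transm}.

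The boundedness claim on compact $K \subset \mathbb{D}\times\R$ then follows by projecting $K$ onto its $\lambda$- and $\x$-factors: on a compact $K_\lambda\subset\mathbb{D}$, the analytic functions $\beta^\pm$, $R^\pm$, $T^\pm$, and $\Im\beta^\pm$ are bounded, and for $\x$ in a compact interval the exponentials $|\e^{\mp\i\beta^\pm(\lambda)\,\x}|=\e^{\pm\Im\beta^\pm(\lambda)\,\x}$ are uniformly controlled. I do not anticipate a serious obstacle here: the only delicate point is verifying that the branch of the square root can be chosen consistently on all of $\mathbb{D}$ so that $\beta^+ + \beta^-$ does not vanish, which is precisely the reason why $\mathbb{D}$ must exclude $(-\infty,-\min(\km^2,\kp^2)]$, i.e., the larger of the two square-root cuts rather than only the smaller one.
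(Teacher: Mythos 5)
Your proposal is correct and follows essentially the same route as the paper: part (i) by direct estimates of $|R^\pm|$, $|T^\pm|$ and the moduli of the exponentials from the explicit formulas, and part (ii) by continuing $\beta^\pm$ with the principal square root on $\C\setminus(-\infty,-\kpm^2]$, observing that $\Re\,\beta^\pm>0$ on $\mathbb{D}$ so the denominator $\beta^++\beta^-$ never vanishes, and bounding the exponentials on compacts. Your added details (the explicit case analysis in (i) and the identity $1+R^\pm=T^\pm$ ensuring consistency at $\x=0$) are correct refinements of the paper's more condensed argument.
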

\begin{proof}
{\rm (i)} As $\beta^\pm(\lambda)\in\R^+\cup\i\R^+$ for $\lambda\in\Lambda^\pm$, we have $|R^\pm(\lambda)|\leq 1$ and $|T^\pm(\lambda)|\leq 2$ on $\Lambda^\pm$, which yields \cref{bound Psi} since the exponentials involved in the expression \cref{eq: def fpg} are either oscillating or evanescent.

{\rm (ii)} Consider the definition \cref{eq: def beta} of $\beta^\pm(\lambda)$ restricted to $\lambda \in (-\k_\pm^2,+\infty)$. Using the principal determination of the complex square root, i.e., $\sqrt{z} := |z|^{1/2}\e^{\i(\arg z) / 2}$ with $\arg z \in (-\pi,+\pi)$, we see that  $\beta^\pm(\lambda)$ has an analytic continuation in $\C \setminus (-\infty,-\k_\pm^2]$. Hence, as both functions $\beta^-(\lambda)$ and $\beta^+(\lambda)$ are involved in the definition \cref{eq: coeff refl transm} of $R^\pm(\lambda)$ and $T^\pm(\lambda)$, the latter functions extend to analytic functions in $\mathbb{D}$, which is the common domain of analyticity of $\beta^-(\lambda)$ and $\beta^+(\lambda)$ (note that the denominator $\beta^+(\lambda) + \beta^-(\lambda)$ cannot vanish in $\mathbb{D}$). Therefore the same holds for $\Psi^\pm(\lambda,\x)$ for any fixed $\x \in \R$. Moreover, as $\e^{\pm\i\beta^\pm(\lambda)\,\x}$ is bounded in any compact set of $\mathbb{D} \times \R$, so is $\Psi^\pm(\lambda,\x)$. \end{proof}
\begin{remark} In this section we interpreted complex exponentials as being ingoing or outgoing based on an implicit convention
, a time-dependence in $\e^{-\i\omega t}$. For a time-dependence in $\e^{\i\omega t}$, the $\Psi^\pm$'s would then be replaced by their conjugates and everything that follows would hold true with this alternative choice.
	\end{remark}

\subsection{Generalized Fourier transform}
The usual Fourier transform $F$ defined in \cref{def: usual Fourier} can be interpreted as an operator of ``decomposition'' on the family of functions $\Psi_\xi(\x) := \e^{\i\xi\x}$ for $\xi\in\R$, which appear as generalized eigenfunctions of the operator $-\i \partial_\x$ in the sense that they satisfy $-\i \partial_\x \Psi_\xi = \xi\,\Psi_\xi$. Moreover $F$ diagonalizes $-\i \partial_\x$, in that it transforms the action of this operator into a multiplication by $\xi$, which can be written in a synthetic form $-\i \partial_\x = F^{-1} \xi \,F$. In this relation, $\xi$ plays the role of a spectral variable that lies in the spectrum of $-\i \partial_\x$, which covers the whole real line $\R$.

Similar properties hold for every selfadjoint operator in a Hilbert space (see, e.g. \cite{Berezansky-etal-1996}). \Cref{th Fourier gen} below tells us that that the projection on the family of generalized eigenfunctions defined in \cref{eq: def fpg} diagonalizes the operator $A$ introduced in \cref{def: A}. In order to make precise the functional framework in which this diagonalization takes place, we need to introduce a few notations to complement \cref{eq:def Lambda_pm}--\cref{eq: coeff refl transm}.

Considering the functions $\rho^\pm: \Lambda^\pm \to \R$ given by
\begin{equation}
	\forall \lambda\in\Lambda^\pm,\quad 
	\rho^\pm(\lambda) := \frac{1}{4\pi\,\beta^\pm(\lambda)} 
	= \frac{1}{4\pi\sqrt{\lambda+\kpm^2}},
	\label{eq: def rho}
\end{equation}
we denote $L^2(\Lambda^\pm,\rho^\pm )$ the weighted space of square integrable functions on $\Lambda^\pm$ for the measure $\rho^\pm\d\lambda$, i.e., the space composed of measurable functions $\Hphi^\pm : \Lambda^\pm \to \C$ such that
\begin{equation*}
	\big\| \Hphi^\pm \big\|^2_{L^2(\Lambda^\pm,\rho^\pm )}
	:= \int_{\Lambda^\pm} \big|\Hphi^\pm(\lambda)\big|^2 
	\,\rho^\pm(\lambda)\,\d\lambda <+\infty.
\end{equation*}
We will also use the space $L^1(\Lambda^\pm,\rho^\pm )$ composed of measurable functions $\Hphi^\pm : \Lambda^\pm \to \C$ such that
\begin{equation*}
	\int_{\Lambda^\pm} \big|\Hphi^\pm(\lambda)\big|
	\,\rho^\pm(\lambda)\,\d\lambda <+\infty.
\end{equation*}
Then we define the Hilbert space
\begin{equation*}
	\HH := L^2(\Lambda^-,\rho^- ) \times L^2(\Lambda^+,\rho^+),
\end{equation*}
which acts as the spectral space in the diagonalization theorem below. The proof of this theorem is based on the spectral theorem, following the same lines as in the pioneering book of Titchmarsh \cite{Titchmarsh-1946}. An explicit construction can be found for instance in \cite{Ott-2017}.

\begin{theorem}\label{th Fourier gen}
	The generalized Fourier transform $\calF$ defined for all $\varphi\in L^1(\R) \cap L^2(\R)$ by
	\begin{equation}
		\calF\varphi := (\calF^-\varphi,\calF^+\varphi) 
		\quad\text{with}\quad
		\calF^\pm\varphi(\lambda) := \int_\R\varphi(\x)\,\overline{\Psi^\pm(\lambda,\x)}\,\d\x \quad\text{for }\lambda\in\Lambda^\pm,
		\label{eq : thm gft f}
	\end{equation}
	extends by density to a unitary operator from $L^2(\R)$ to $\HH$. It diagonalizes $A$ in the sense that
	\begin{equation}
		\forall \varphi\in H^2(\R),\quad  A\varphi=\calF^{-1} \lambda\,\calF\varphi, 
		\label{eq: diagonalisation}
	\end{equation}
	where the notation $\lambda$ stands for the operator of multiplication by $\lambda$ in $\HH$. Moreover, the inverse transform $\calF^{-1}$ writes explicitly as follows, for all $(\Hphi^-,\Hphi^+)\in\HH$ such that $\Hphi^\pm \in L^1(\Lambda^\pm,\rho^\pm )$:
	\begin{equation}
		\forall \x\in\R, \quad
		\calF^{-1} (\Hphi^-,\Hphi^+)(\x) = \sum_{\pm}\int_{\Lambda^\pm}
		\Hphi^\pm(\lambda)\,\Psi^\pm(\lambda,\x)\,\rho^\pm(\lambda)\,\d\lambda.
		\label{eq : thm gft f-1}
	\end{equation}
\end{theorem}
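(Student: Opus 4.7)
The plan is to apply the spectral theorem for the self-adjoint operator $A$ via an explicit computation of its resolvent and Stone's formula, identifying the resolvent jump with a kernel built from the generalized eigenfunctions $\Psi^\pm$. First, I would construct the Green's function of $A-z$ for $z\in\mathbb{D}$. By \cref{prop: fpg}, the $\Psi^\pm$ and the coefficients $R^\pm,T^\pm$ extend analytically to $\mathbb{D}$, and the two Jost-type solutions $f^+(z,\x):=\Psi^-(z,\x)/T^-(z)$ and $f^-(z,\x):=\Psi^+(z,\x)/T^+(z)$ behave like $\e^{\i\beta^+(z)\x}$ as $\x\to+\infty$ and $\e^{-\i\beta^-(z)\x}$ as $\x\to-\infty$, respectively, hence decay at $\pm\infty$ as soon as $\Im z>0$. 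The resolvent kernel then takes the standard Sturm--Liouville form
\[
G(\x,\x';z) = \frac{f^-(z,\x_<)\,f^+(z,\x_>)}{W(z)}, \qquad W(z) = \i\bigl(\beta^-(z)+\beta^+(z)\bigr),
\]
analytic in $z\in\mathbb{D}$ for each fixed $(\x,\x')\in\R^2$.

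Next, I would apply Stone's formula $dE_\lambda = (2\pi\i)^{-1}[R(\lambda+\i 0)-R(\lambda-\i 0)]\,d\lambda$, read on the kernel side. The essential spectrum of $A$ is $[-\max(\km^2,\kp^2),+\infty)$, with a jump of multiplicity at $-\min(\km^2,\kp^2)$ coming from the second branch point of $\beta^\pm$. Using the branch-cut relations $\beta^\pm(\lambda+\i 0) = \overline{\beta^\pm(\lambda-\i 0)}$ on $(-\infty,-\k_\pm^2]$ together with the elementary identities $R^+=-R^-$, $T^+/T^- = \beta^+/\beta^-$ and flux conservation $|R^\pm|^2 + (\beta^\mp/\beta^\pm)|T^\pm|^2 = 1$, a direct computation should yield
\[
G(\x,\x';\lambda+\i 0) - G(\x,\x';\lambda-\i 0) = 2\pi\i\sum_{\pm}\mathbf{1}_{\Lambda^\pm}(\lambda)\,\rho^\pm(\lambda)\,\Psi^\pm(\lambda,\x)\,\overline{\Psi^\pm(\lambda,\x')},
\]
the weight $\rho^\pm(\lambda)=1/(4\pi\beta^\pm(\lambda))$ arising precisely from the square-root behavior of $\beta^\pm$ across its branch cut.

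Finally, this spectral resolution immediately gives the inversion formula \cref{eq : thm gft f-1} for $\varphi\in L^1\cap L^2(\R)$, together with Plancherel: $\|\varphi\|^2_{L^2(\R)} = \sum_\pm \|\calF^\pm\varphi\|^2_{L^2(\Lambda^\pm,\rho^\pm)}$. Hence $\calF$ extends by density to an isometry $L^2(\R)\to\HH$, and the completeness statement $I=\int dE_\lambda$ (valid because $A$ has empty point spectrum, as noted above \cref{eq:def Lambda_pm}) furnishes the surjectivity onto $\HH$. The diagonalization \cref{eq: diagonalisation} for $\varphi\in H^2(\R)$ is then the standard functional calculus $A=\int\lambda\,dE_\lambda$. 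The main obstacle is the explicit verification of the resolvent-jump identity above: it requires a separate case analysis on $(-\max(\km^2,\kp^2),-\min(\km^2,\kp^2))$, where only one generalized eigenfunction exists and corresponds to total reflection, and on $(-\min(\km^2,\kp^2),+\infty)$, where both $\Psi^\pm$ contribute, and it hinges on the scattering identities above that encode the unitarity of the two-by-two $(R,T)$-matrix.
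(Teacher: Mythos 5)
Your proposal is correct and takes essentially the same route as the paper: the paper does not prove \cref{th Fourier gen} itself but invokes the spectral theorem ``following the same lines as Titchmarsh'' with the explicit construction deferred to \cite{Ott-2017}, and that construction is precisely the resolvent/Stone's-formula argument you outline (Jost solutions $\Psi^\mp/T^\mp$, Sturm--Liouville Green's function with Wronskian $\i(\beta^++\beta^-)$, and the flux identity $|R^\pm|^2+(\beta^\mp/\beta^\pm)|T^\pm|^2=1$ reducing the resolvent jump to the kernel $\sum_\pm\rho^\pm\Psi^\pm(\lambda,\x)\overline{\Psi^\pm(\lambda,\x')}$, which indeed checks out). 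Only minor bookkeeping remains, e.g.\ the sign in the normalization $G=-f^-(\x_<)f^+(\x_>)/W$ and the fact that on $(-\min(\km^2,\kp^2),+\infty)$ the jump arises because $R(\lambda-\i 0)$ is built from the conjugate Jost solutions rather than from a branch cut of $\beta^\pm$.
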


\begin{remark}
	Note that formula \cref{eq : thm gft f} makes sense for all functions $\varphi \in L^1(\R)$, since \cref{bound Psi} tells us that $\Psi^\pm(\lambda,\x)$ remains bounded. But for $\varphi \in L^2(\R) \setminus L^1(\R)$, it becomes an improper integral whose definition follows from a density argument as in the definition of the usual Fourier transform in $L^2(\R)$. Again thanks to \cref{bound Psi}, the same holds true for formula \cref{eq : thm gft f-1}, which makes sense for all $\Hphi^\pm \in L^1(\Lambda^\pm,\rho^\pm )$.
	\label{rem Fourier gen L2-L1}
\end{remark}

\Cref{th Fourier gen} provides us a spectral decomposition of $L^2(\R)$ associated with operator $A$. On the one hand, $\calF$ acts as an operator of ``decomposition'' on the family composed of the $\Psi^\pm$'s, in the sense that it furnishes the spectral components $\calF\varphi$ of $\varphi$. These components are functions of the variable $\lambda$ that lies in the spectrum of $A$, which is purely continuous and given by $\overline{\Lambda^- \cup \Lambda^+} = [-\max(\km^2,\kp^2),+\infty)$. On the other hand, $\calF^{-1}$ appears as an operator of ``recomposition'' which allows us to reconstruct a function $\varphi$ from its spectral components.

It is instructive to understand how this generalized Fourier transform relates to the usual one in the particular case of a homogeneous medium, that is, when $\km=\kp$. Simply denoting $\k$ this common value, we have in this case $\Lambda^- = \Lambda^+ = (-\k^2,+\infty)$ and
\begin{equation*}
	\forall \lambda\in (-\k^2,+\infty),\quad 
	\beta^-(\lambda) =\beta^-(\lambda)= \sqrt{\lambda+\k^2}, \quad
	R^\pm(\lambda) = 0 \quad\text{and}\quad
	T^\pm(\lambda) = 1.
\end{equation*}
Therefore,
\begin{equation*}
	\forall (\lambda,\x) \in (-\k^2,+\infty) \times \R, \quad 
	\Psi^\pm(\lambda,\x) = \e^{\mp\i \sqrt{\lambda+\k^2}\,\x}.
\end{equation*}
This shows that the definition \cref{eq : thm gft f} of the generalized Fourier transform simplifies as
\begin{equation*}
	\calF\varphi := (\calF^-\varphi,\calF^+\varphi) 
	\quad\text{with}\quad
	\calF^\pm\varphi(\lambda) := \int_\R\varphi(\x)\,\e^{\pm\i \sqrt{\lambda+\k^2}\,\x}\,\d\x \quad\text{for }\lambda\in(-\k^2,+\infty).
\end{equation*}
We observe that we return to the definition \cref{def: usual Fourier} of the usual Fourier transform simply by setting $\xi = \mp \sqrt{\lambda+\k^2}$. More precisely, introducing the operator $\calC$ associated to this change of variable, defined for a function $g$ of the $\xi$-variable by
\begin{equation*}
	\calC g := \big( \calC^- g \,\,, \calC^+ g\big)
	\quad\text{where}\quad
	\calC^\pm g\,(\lambda) := g\big(\mp\sqrt{\lambda+\k^2}\big)
	\quad\text{for }\lambda\in(-\k^2,+\infty),
\end{equation*}
we have
\begin{equation*}
	\calF = \calC\,F.
\end{equation*}
Note that this formula allows us to derive the unitary nature of $\calF$ asserted in \cref{th Fourier gen} from that of the usual Fourier transform. Indeed we know on the one hand that $F$ is unitary from $L^2(\R_\x)$ to $L^2(\R_\xi, (2\pi)^{-1})$. On the other hand, noticing that
\begin{equation*}
	\frac{1}{2\pi} \int_{\R} \big| g(\xi) \big|^2\,\d\xi 
	= \sum_{\pm} \int_{-\k^2}^{+\infty} \big| g(\mp\sqrt{\lambda+\k^2}) \big|^2\,\frac{1}{4\pi\sqrt{\lambda+\k^2}}\,\d\lambda,
\end{equation*}
we infer that $\calC$ is unitary from $L^2(\R_\xi, (2\pi)^{-1})$ to $\HH$. Thus their product is unitary from $L^2(\R_\x)$ to $\HH$. The above formula explains the presence of the weight functions $\rho^\pm(\lambda)$ in the spectral space of the generalized Fourier transform.

\subsection{Deriving the half-plane representation}
\label{ssec: deriving HPR}
We are now able to generalize the half-plane representation \cref{cas hom/ rep demi plan} to a two-layered medium. The technical difficulty mentioned in \cref{rem:funct-details} leads us to modify our definition \cref{def half plane} of $\rmH$, without loss of generality. From now on, $\rmH$ denotes the half-plane defined in a local coordinate system $(\x,\y)$ by
\begin{equation}
	\rmH := \{ (\x,\y) \in \R^2 \mid \y > -\varepsilon \},
	\label{def half plane eps}
\end{equation}
where $\varepsilon > 0$ is a fixed parameter whose role is simply that the $\x$-axis $\Sigma := \R \times \{0\}$ is now contained in $\rmH$. Our aim is to derive a half-plane representation of a solution $u\in L^2(\rmH)$ to the Helmholtz equation in $\rmH$, i.e., to express $u$ by means of its trace on $\Sigma$. As $\Sigma \subset \rmH$, interior regularity of solutions to the Helmholtz equation \cref{eq: Helmholtz in H} shows this trace is well defined as a continuous function (since $u \in H^2_{\rm loc}(\rmH)$). 

\begin{proposition} \label{prop: HPR}
	Suppose that $u\in L^2(\rmH)$ satisfies 
	\begin{equation}
		-\Delta u - \k^2 u = 0  \quad\text{in } \rmH,
		\label{eq: Helmholtz in H}
	\end{equation}
	in the distributional sense, where $\k = \k(\x)$ is given in \cref{def k dioptre}. Then,
	\begin{equation}
		\forall (\x,\y) \in \R \times [0,+\infty),\quad 
		u(\x,\y) = \sum_\pm \int_{\R^+} \Hphi^\pm(\lambda)\,
		\Psi^\pm(\lambda,\x) \,\e^{-\sqrt{\lambda}\y}\,\rho^\pm(\lambda)\,\d\lambda,
		\label{eq: HPR}
	\end{equation}
	where $\Psi^\pm$ and $\rho^\pm$ are defined respectively in \cref{eq: def fpg} and \cref{eq: def rho}, and $\Hphi^\pm := \calF^\pm u(\cdot,0)$ has the following properties:
	\begin{equation}
		\Hphi^\pm(\lambda)= 0 \text{ if }\lambda \in(-\kpm^2,0)
		\quad\text{and}\quad
		\lambda^s\,\Hphi^\pm(\lambda) \in L^1(\R^+,\rho^\pm) \text{ for } s > -\frac{3}{4}.
		\label{eq: HPR prop phi}
	\end{equation}
\end{proposition}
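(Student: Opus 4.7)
The plan is to follow the homogeneous-case template of \Cref{section : homogeneous case}, but using the generalized Fourier transform $\calF$ of \Cref{th Fourier gen} in place of the usual Fourier transform in the $\x$-variable. Concretely, I would separate variables via $\calF$, solve the resulting ODE in $\y$, use the $L^2(\rmH)$ assumption to eliminate all non-evanescent modes, and finally recover $u$ through the inversion formula \cref{eq : thm gft f-1}.

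First, since $u \in L^2(\rmH)$, Fubini yields $u(\cdot,\y) \in L^2(\R)$ for almost every $\y > -\varepsilon$, so one may set $\Hu^\pm(\lambda,\y) := \calF^\pm[u(\cdot,\y)](\lambda)$. Testing the distributional equation $-\Delta u - \k^2 u = 0$ against products $\Psi^\pm(\lambda,\cdot)\,\eta(\y)$ with $\eta \in C_c^\infty((-\varepsilon,+\infty))$, and exploiting the diagonalization identity \cref{eq: diagonalisation} so that $-\partial_\x^2-\k^2$ acts as multiplication by $\lambda$, would give
\begin{equation*}
	-\partial_\y^2 \Hu^\pm(\lambda,\y) + \lambda\, \Hu^\pm(\lambda,\y) = 0 \quad \text{on } (-\varepsilon,+\infty)
\end{equation*}
in the distributional sense in $\y$, for almost every $\lambda \in \Lambda^\pm$. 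Elliptic regularity in $\y$ turns this into a classical ODE whose general solution reads $\Hu^\pm(\lambda,\y) = A^\pm(\lambda)\,\e^{-\sqrt{\lambda}\y} + B^\pm(\lambda)\,\e^{+\sqrt{\lambda}\y}$, with the convention $\sqrt{\lambda} := \i\sqrt{-\lambda}$ when $\lambda < 0$.

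Because $\calF$ is unitary, $\|u(\cdot,\y)\|_{L^2(\R)}^2 = \|\Hu(\cdot,\y)\|_{\HH}^2$, so integrating in $\y$ and applying Fubini shows that for a.e.\ $\lambda \in \Lambda^\pm$, the function $\y \mapsto \Hu^\pm(\lambda,\y)$ belongs to $L^2((-\varepsilon,+\infty))$. For $\lambda > 0$ the growing real exponential $\e^{+\sqrt{\lambda}\y}$ is not square-integrable near $+\infty$, forcing $B^\pm(\lambda)=0$; for $\lambda \leq 0$ both exponentials are bounded oscillations on the half-line, and hence not in $L^2((-\varepsilon,+\infty))$, forcing $A^\pm(\lambda)=B^\pm(\lambda)=0$. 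Evaluating at $\y=0$ identifies $A^\pm(\lambda) = \Hphi^\pm(\lambda)$ for $\lambda>0$, which simultaneously yields the vanishing assertion $\Hphi^\pm(\lambda)=0$ on $(-\kpm^2,0)$ and the relation $\Hu^\pm(\lambda,\y)=\Hphi^\pm(\lambda)\,\e^{-\sqrt{\lambda}\y}$ for $\lambda > 0$. Plugging this into the inversion formula \cref{eq : thm gft f-1} then produces the representation \cref{eq: HPR}.

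As for the integrability, Plancherel applied to the (interior) trace $u(\cdot,0) \in L^2(\R)$ gives $\Hphi^\pm \in L^2(\Lambda^\pm,\rho^\pm)$. Integrating the Parseval identity $\|u(\cdot,\y)\|_{L^2(\R)}^2 = \sum_\pm \int_{\R^+} |\Hphi^\pm(\lambda)|^2 \e^{-2\sqrt{\lambda}\y} \rho^\pm(\lambda)\,\d\lambda$ over $\y \in (0,+\infty)$ and using Fubini would also produce the energy-type estimate
\begin{equation*}
	\sum_\pm \int_{\R^+} \frac{|\Hphi^\pm(\lambda)|^2}{2\sqrt{\lambda}}\, \rho^\pm(\lambda)\,\d\lambda \leq \|u\|_{L^2(\rmH)}^2,
\end{equation*}
and a Cauchy--Schwarz estimate combining these two $L^2$ bounds would then control $\int_{\R^+} \lambda^s |\Hphi^\pm|\,\rho^\pm \,\d\lambda$ in the stated range $s > -3/4$. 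The main obstacle is the rigorous commutation of $\calF$ with the distributional PDE: one must justify that testing against $\Psi^\pm(\lambda,\cdot)$, combined with Fubini and the diagonalization identity \cref{eq: diagonalisation}, really does yield for almost every $\lambda$ a scalar distributional ODE in $\y$. Once that reduction is in hand, the remaining steps are essentially routine.
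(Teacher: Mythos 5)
Your overall strategy is the paper's: apply $\calF$ in $\x$, reduce to the ODE $-\partial_\y^2\Hu^\pm+\lambda\Hu^\pm=0$, kill the growing/oscillating solutions using $\Hu^\pm(\lambda,\cdot)\in L^2$, and invert. But there is a genuine gap in your integrability estimate, and it is exactly the point that motivates the paper's redefinition of $\rmH$ as $\{\y>-\varepsilon\}$ (see \cref{rem:funct-details}). You integrate the Parseval identity only over $\y\in(0,+\infty)$, which gives $\int_{\R^+}|\Hphi^\pm(\lambda)|^2(2\sqrt\lambda)^{-1}\rho^\pm\,\d\lambda<\infty$, i.e.\ $\lambda^{-1/4}\Hphi^\pm\in L^2(\R^+,\rho^\pm)$. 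Cauchy--Schwarz then requires $\lambda^{s+1/4}\in L^2(\R^+,\rho^\pm)$, and since $\rho^\pm(\lambda)\sim\lambda^{-1/2}$ at infinity this norm diverges for $s\geq -1/2$; combining instead with the plain Plancherel bound $\Hphi^\pm\in L^2(\R^+,\rho^\pm)$ only reaches $s<-1/4$. So your two $L^2$ bounds cannot deliver the full range $s>-3/4$, and in particular not $s=0$, which is needed even to make sense of \cref{eq: HPR} as a Lebesgue integral (\cref{rem: integ HPR}). The fix is to integrate the Parseval identity over the whole interval $(-\varepsilon,+\infty)$: since $\Hu^\pm(\lambda,\y)=\Hphi^\pm(\lambda)\e^{-\sqrt\lambda\y}$ there, one gets $\lambda^{-1/4}\e^{\sqrt\lambda\varepsilon}\Hphi^\pm\in L^2(\R^+,\rho^\pm)$, and the exponential gain at large $\lambda$ lets you pair with $\lambda^{s+1/4}\e^{-\sqrt\lambda\varepsilon}$, whose $L^2(\R^+,\rho^\pm)$ norm is finite precisely for $s>-3/4$ (the constraint now comes only from $\lambda\to 0$).

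A secondary point: testing the distributional equation directly against $\Psi^\pm(\lambda,\cdot)\,\eta(\y)$ is not legitimate, since $\Psi^\pm(\lambda,\cdot)$ is not compactly supported and $u(\cdot,\y)$ is only $L^2$, so the pairing is an improper integral. The paper's route is to test against $\psi(\x)\chi(\y)$ with $\psi\in\calD(\R)$, and then transfer to the spectral side via the unitarity of $\calF$ and the identity $\calF(A\psi)=\lambda\,\calF\psi$, together with a density argument to interchange $\calF$ with the $\y$-duality product. You correctly flag this commutation as the main technical point, but the argument as proposed would need to be restructured along those lines rather than carried out with $\Psi^\pm$ as a test function.
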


\begin{remark}
	Note that the integral in \cref{eq: HPR} is well defined (in the Lebesgue sense). Indeed $\Psi^\pm(\lambda,\x)$ is bounded (see \cref{bound Psi}), as well as $\e^{-\sqrt{\lambda}\y}$ for $\y \in [0,+\infty)$, and $\Hphi^\pm$ belongs to $L^1(\Lambda^\pm,\rho^\pm)$ (according to \cref{eq: HPR prop phi} with $s=0$).
	\label{rem: integ HPR}
\end{remark}

\begin{proof}
	As $u\in L^2(\rmH)$, we know that for almost every $\y\in (-\varepsilon,+\infty)$, we have $u(\cdot,\y)\in L^2(\R)$, so that we can define its partial generalized Fourier transform in the $\x$-direction by 
	$\Hu(\cdot\,,\y) 
	:= \big( \Hu^-(\cdot\,,\y),\Hu^+(\cdot\,,\y)\big)$ 
	where 
	$\Hu^\pm(\cdot\,,\y) := \calF^\pm u(\cdot,\y).$ As $\calF$ is  unitary from $L^2(\R)$ to $\HH$, we have
	\begin{equation*}
		\| u(\cdot\, ,\y) \|_{L^2(\R)}^2 
		= \sum_\pm \big\| \Hu^\pm(\cdot\, ,\y) \big\|_{L^2(\Lambda^\pm,\rho^\pm)}^2.
	\end{equation*}
	Integrating on $(-\varepsilon,+\infty)$ with respect to $\y$ then yields
	\begin{align}
		\| u \|_{L^2(\rmH)}^2 
		& = \sum_\pm \int_{-\varepsilon}^{+\infty} 
		\int_{\Lambda^\pm} \big| \Hu^\pm(\lambda,\y) \big|^2
		\,\rho^\pm(\lambda)\,\d\lambda\,\d\y \label{eq: Parseval u} \\
		& = \sum_\pm \int_{\Lambda^\pm} \big\| \Hu^\pm(\lambda,\cdot) \big\|_{L^2(-\varepsilon,+\infty)}^2
		\,\rho^\pm(\lambda)\,\d\lambda, \nonumber
	\end{align}
	where the second equality follows from Fubini's theorem. This shows in particular that
	\begin{equation}
		\text{for almost every }\lambda \in \Lambda^\pm,\quad
		\Hu^\pm(\lambda,\cdot) \in L^2(-\varepsilon,+\infty).
		\label{eq: partial GFT L2}
	\end{equation}
	
	We prove now that applying the partial generalized Fourier transform to the Helmholtz equation \cref{eq: Helmholtz in H} yields 
	\begin{equation}
		- \partial^2_\y \Hu^\pm(\lambda,\cdot) 
		+ \lambda\,\Hu^\pm(\lambda,\cdot) = 0 
		\quad \text{in }(-\varepsilon,+\infty)
		\label{eq:Fourier Helmholtz}
	\end{equation}
	in the distributional sense, for almost every $\lambda\in\Lambda^\pm$. The idea is to use test functions with separated variables, i.e., in the form $\psi(\x)\,\chi(\y)$. Indeed, the Helmholtz equation \cref{eq: Helmholtz in H} amounts to saying that for all $\psi \in \calD(\R)$ and $\chi \in \calD(-\varepsilon,+\infty)$ (usual spaces of compactly supported indefinitely differentiable functions), we have
	\begin{equation*}
		\big\langle \big\langle -\partial^2_\x u - \partial^2_\y u - \k^2 u, \psi\,\chi \big\rangle_{\!\x} \, \big\rangle_{\!\y} = 0,
	\end{equation*}
	where we distinguish the anti-duality products in the $\x$ and $\y$ variables, which can be interchanged. From the definition of derivatives in the distributional sense, we deduce that
	\begin{equation*}
		\big\langle \big\langle u\,, -(\partial^2_\x \psi)\,\chi- \psi\,(\partial^2_\y\chi) \big\rangle_{\!\x} \, \big\rangle_{\!\y}
		 -\big\langle \big\langle \k^2 u\,, \psi\,\chi \big\rangle_{\!\x} \, \big\rangle_{\!\y} = 0.
	\end{equation*}
	or equivalently
	\begin{equation*}
		-\Big\langle \big\langle u\,,\chi \big\rangle_{\!\y}\,, \partial^2_\x\psi \Big\rangle_{\!\x}
		-\Big\langle \k^2\, \big\langle u\,,\chi \big\rangle_{\!\y}\,, \psi \Big\rangle_{\!\x}
		- \Big\langle \big\langle u\,,\partial^2_\y\chi \big\rangle_{\!\y}\,, \psi \Big\rangle_{\!\x} = 0.
	\end{equation*}
	As $u \in L^2(\rmH)$, the anti-duality products in this expression can be replaced by integrals, which allows us to rewrite it as
	\begin{equation*}
		\int_\R \big\langle u(\x,\cdot),\chi\big\rangle_{\!\y}\,
		\overline{\big( -\partial^2_\x \psi(\x) - \k(\x)^2 \psi(\x)  \big)}\,\d\x
		- \int_\R \big\langle u(\x,\cdot),\partial^2_\y\chi\big\rangle_{\!\y}\, \overline{\psi(\x)}
		\,\d\x = 0.
	\end{equation*}
	Interpreting both integrals as inner products in $L^2(\R)$, we can use the generalized Plancherel's identity following from the unitary nature of $\calF$ from $L^2(\R)$ to $\HH$. Denoting $(\Hpsi^-,\Hpsi^+) := \calF \psi$, we infer that the first integral is equal to
	\begin{equation*}
		\sum_{\pm}\int_{\Lambda^\pm}  
		\big\langle \Hu^\pm(\lambda,\cdot),\chi\big\rangle_{\!\y} 
		\ \overline{\lambda \, \Hpsi^\pm(\lambda)}\,\rho^\pm(\lambda)\,\d\lambda.
	\end{equation*}
	Indeed, on the one hand, we have $\calF(-\partial^2_\x \psi - \k^2 \psi) = \calF(A \psi) = \lambda\,\calF \psi$ thanks to \cref{eq: diagonalisation}. On the other hand, the generalized Fourier transform of $\x \to \big\langle u(\x,\cdot),\chi\big\rangle_{\!\y}$ is given by $\big\langle \Hu^\pm(\lambda,\cdot),\chi\big\rangle_{\!\y}$, which amounts to interchanging $\calF$ with the $\y$-anti-duality products. This permutation is obvious when $\calF$ applies to functions with separated variables; as these functions span $L^2(\rmH)$, the general case follows from a density argument. Computing the second integral yields finally
	\begin{equation*}
		\sum_{\pm}\int_{\Lambda^\pm}  
		\bigg( \lambda \, \big\langle \Hu^\pm(\lambda,\cdot),\chi\big\rangle_{\!\y}
		- \big\langle \Hu^\pm(\lambda,\cdot),\partial^2_\y\chi\big\rangle_{\!\y} \bigg)
		\,\overline{\Hpsi^\pm(\lambda)}\,\rho^\pm(\lambda)\,\d\lambda = 0,
	\end{equation*}
	As this equality holds true for all $\psi \in \calD(\R)$, we infer that the quantity between parentheses must vanish for almost every $\lambda \in \Lambda^\pm$ and every $\chi \in \calD(-\varepsilon,+\infty)$, which is exactly equation \cref{eq:Fourier Helmholtz}.
	
	From this equation, we deduce that there exist functions $A^\pm(\lambda)$ and $B^\pm(\lambda)$ such that
	\begin{equation*}
		\Hu^\pm(\lambda,\y) = A^\pm(\lambda) \,\e^{-\sqrt{\lambda}(\y+\varepsilon)} + B^\pm(\lambda) \,\e^{+\sqrt{\lambda}(\y+\varepsilon)},
	\end{equation*}
	where we keep the convention $\sqrt{\lambda} := \i\sqrt{-\lambda}$ if $\lambda<0$. Thanks to \cref{eq: partial GFT L2}, we infer that 
	$A^\pm(\lambda)=0$ if $\lambda\in(-\kpm^2,0)$ and $B^\pm(\lambda) = 0$ for all $\lambda\in\Lambda^\pm$. Therefore, 
	\begin{equation*}
		\Hu^\pm(\lambda,\y) 
		= A^\pm(\lambda)\,\e^{-\sqrt{\lambda}(\y+\varepsilon)} 
		\quad\text{where}\quad 
		A^\pm (\lambda) = 0\text{ if }\lambda\in(-\kpm^2,0).
	\end{equation*}
	According to the definition $\Hphi^\pm := \calF^\pm u(\cdot,0)$, this can be written equivalently
	\begin{equation*}
		\Hu^\pm(\lambda,\y) 
		= \Hphi^\pm(\lambda)\,\e^{-\sqrt{\lambda}\y} 
		\quad\text{where}\quad 
		\Hphi^\pm(\lambda) = 0\text{ if }\lambda\in(-\kpm^2,0).
	\end{equation*}
	Using the expression \cref{eq : thm gft f-1} of $\calF^{-1}$, we obtain the half-plane representation \cref{eq: HPR}, where the existence of the integral will be justified, once \cref{eq: HPR prop phi} is proven (see \cref{rem: integ HPR}).
	
	To do so, notice that for almost every $\lambda\in\Lambda^\pm$,
	\begin{equation*}
		\big\| \Hu^\pm(\lambda,\cdot) \big\|^2_{L^2(-\varepsilon,+\infty)} 
		= \big|\Hphi^\pm(\lambda)\big|^2 \, 
		\int_{-\varepsilon}^{+\infty} \e^{-2\sqrt{\lambda}\y}\,\d\y
		= \frac{\big|\Hphi^\pm(\lambda)\big|^2
			 \,\e^{2\sqrt{\lambda}\varepsilon}}{2\sqrt{\lambda}}.
	\end{equation*}
	Hence we deduce from \cref{eq: Parseval u} that
	\begin{equation*}
		\| u \|_{L^2(\rmH)}^2 
		= \sum_\pm \int_{\R^+} \frac{\big|\Hphi^\pm(\lambda)\big|^2 \,\e^{2\sqrt{\lambda}\varepsilon}}{2\sqrt{\lambda}}
		\,\rho^\pm(\lambda)\,\d\lambda,
	\end{equation*}
	which shows that $\lambda^{-1/4}\,\e^{\sqrt{\lambda}\varepsilon}\,\Hphi^\pm(\lambda) 
	\in L^2(\R^+,\rho^\pm).$ Therefore Cauchy-Schwartz inequality yields
	\begin{equation*}
		\int_{\R^+} \lambda^s\, \big|\Hphi^\pm(\lambda)\big| \rho^\pm(\lambda)\,\d\lambda
		\leq 
		\left\|
		\lambda^{-1/4}\,\e^{\sqrt{\lambda}\varepsilon}\,\Hphi^\pm(\lambda)
		\right\|_{L^2(\R^+,\rho^\pm)} \
		\left\|
		\lambda^{s+1/4}\,\e^{-\sqrt{\lambda}\varepsilon}
		\right\|_{L^2(\R^+,\rho^\pm)},
	\end{equation*}
	where the last norm is bounded provided $s>-3/4$, which proves \cref{eq: HPR prop phi}.
\end{proof}

The following corollary of \cref{prop: HPR} tells us about the behaviour of $u$ in any non-transverse direction.

\begin{corollary} \label{cor: HPR}
	Let $u$ defined as in \cref{prop: HPR}. Then for any half-line $\Gamma_\alpha:=\{(t\cos\alpha,  t\sin\alpha) \in\R^2 \mid t>0\}\subset \rmH$ where $\alpha\in(0,\pi)$, we have $u_{|\Gamma_\alpha}\in L^1(\Gamma_\alpha)$.  
\end{corollary}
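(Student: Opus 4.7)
The plan is to integrate the half-plane representation \cref{eq: HPR} directly along the ray $\Gamma_\alpha$ and show that the resulting iterated integral is absolutely convergent. Parametrising $\Gamma_\alpha$ by arc length as $(t\cos\alpha,t\sin\alpha)$ with $t>0$, we would write
\begin{equation*}
\int_{\Gamma_\alpha}|u|\,\d s
=\int_0^{+\infty}\bigl|u(t\cos\alpha,t\sin\alpha)\bigr|\,\d t.
\end{equation*}
Since $\alpha\in(0,\pi)$ we have $\sin\alpha>0$, and in particular $\y=t\sin\alpha\geq 0$ for $t\geq 0$, so \cref{eq: HPR} is applicable along the whole ray.

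Next I would insert the representation \cref{eq: HPR} and apply Tonelli's theorem to swap the $\d t$ and $\d\lambda$ integrals, using the pointwise bound $|\Psi^\pm(\lambda,\x)|\leq 2$ from \cref{bound Psi}. This gives
\begin{equation*}
\int_{\Gamma_\alpha}|u|\,\d s
\leq 2\sum_{\pm}\int_{\R^+}|\Hphi^\pm(\lambda)|\,\rho^\pm(\lambda)
\left(\int_0^{+\infty}\e^{-\sqrt{\lambda}\,t\sin\alpha}\,\d t\right)\d\lambda.
\end{equation*}
The inner $t$-integral is elementary: for $\lambda>0$ it equals $(\sqrt{\lambda}\sin\alpha)^{-1}$, so
\begin{equation*}
\int_{\Gamma_\alpha}|u|\,\d s
\leq \frac{2}{\sin\alpha}\sum_{\pm}\int_{\R^+}\lambda^{-1/2}\,|\Hphi^\pm(\lambda)|\,\rho^\pm(\lambda)\,\d\lambda.
\end{equation*}

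To conclude, I would invoke the integrability statement \cref{eq: HPR prop phi} in \cref{prop: HPR} with $s=-1/2$, which is admissible since $-1/2>-3/4$. This says exactly that $\lambda^{-1/2}\Hphi^\pm$ belongs to $L^1(\R^+,\rho^\pm)$, so the right-hand side above is finite, which establishes $u_{|\Gamma_\alpha}\in L^1(\Gamma_\alpha)$. There is really no serious obstacle here: the only mild point to keep track of is that the positivity of $\sin\alpha$ (guaranteed by $\alpha\in(0,\pi)$) is what produces the decaying exponential factor needed both to justify Tonelli and to make the $t$-integral converge, and the sharp exponent $-3/4$ in \cref{eq: HPR prop phi} is what makes $s=-1/2$ admissible.
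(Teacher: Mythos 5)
Your argument is correct and coincides with the paper's own proof: both insert the representation \cref{eq: HPR}, use the bound \cref{bound Psi} together with Tonelli/Fubini to reduce to the elementary $t$-integral $(\sqrt{\lambda}\sin\alpha)^{-1}$, and conclude via \cref{eq: HPR prop phi} with $s=-1/2$. No differences worth noting.
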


\begin{proof}
	Using the half-plane representation \cref{eq: HPR}, then \cref{bound Psi} and Fubini's theorem, we obtain	
	\begin{align*}
		&\int_0^{+\infty}|u(t\cos\alpha,t\sin\alpha)|\,\d t \\
		&\hspace{2cm}= \int_0^{+\infty}\left| \sum_\pm \int_{\R^+} \Hphi^\pm(\lambda)\,
		\Psi^\pm(\lambda,t\cos\alpha) \,\e^{-\sqrt{\lambda}t\sin\alpha}
		\,\rho^\pm(\lambda)\,\d\lambda  \right| \d t \\
		&\hspace{2cm}\leq 2 \sum_{\pm}\int_0^{+\infty} \int_{\R^+} 
		\left| \Hphi^\pm(\lambda) \right|\,
		\e^{-\sqrt{\lambda}t\sin\alpha} \,\rho^\pm(\lambda)\,\d\lambda \d t \\
		&\hspace{2cm}\leq 2 \sum_\pm \int_{\R^+} 
		\frac{\left| \Hphi^\pm(\lambda) \right|}{\sqrt{\lambda}\sin\alpha}
		\,\rho^\pm(\lambda)\,\d\lambda,
	\end{align*}
	where the last integral is bounded, according to \cref{eq: HPR prop phi} with $s = -1/2$.
\end{proof}

\section{Junction of two-layered media}
\label{sec: two-layer}
In this section, we come back to the junction of stratified media as defined in \cref{setting pb}, but we restrict ourselves to the particular case of two-layered media, as considered in \cref{sec: HPR two-layered}. Hence each of the three half-planes $\rmH_\W$, $\rmH_\N$ and $\rmH_E$ is characterized by a two-valued wavenumber function $\k_\J$ for $\J \in \{\W,\N,\E\}$, where $\k_\J(\x) = \k_{\J,\pm}$ if $\pm\x_\J > 0$. By construction, we have
\begin{multline}
	\k_{\W,+} = \k_{\N,-} \quad \text{and} \quad \k_{\E,-} = \k_{\N,+},\\
	\text{as well as}\quad \k_{\W,-} = \k_{\E,+} 
	\quad\text{if}\quad (\theta_\W,\theta_\E ) \neq (+\pi/2,-\pi/2).
	\label{eq: k two-layer}
\end{multline}

The various objects introduced in \cref{sec: HPR two-layered} for a single half-plane will simply be indexed by $\J\in\{\W,\N,\E\}$: the transverse wavenumbers $\beta^\pm_\J(\lambda)$, the reflection and transmission coefficients $R^\pm_\J(\lambda)$ and  $T^\pm_\J(\lambda)$, the generalized eigenfunctions $\Psi^\pm_\J(\lambda,\x_\J)$, the weight functions $\rho^\pm_\J(\lambda)$ and the generalized Fourier transforms $\calF_\J$ (see \cref{eq: def fpg}--\cref{eq: coeff refl transm}, \cref{eq: def rho} and \cref{th Fourier gen}).
 
Recall that we have replaced the initial definition \cref{def half plane} of $\rmH$ by \cref{def half plane eps} so that the $\x$-axis $\Sigma := \R \times \{0\}$ is now contained in $\rmH$. The same goes for each half-plane $\rmH_\J$ for $\J \in \{\W,\N,\E\}$, whose local coordinates are still related to the global ones $(x,y)$ by \cref{eq: coord locales}. We denote by $\Sigma_\J$ the $\x_\J$-axis of $\rmH_\J$, which is contained in $\rmH_\J$. Note that the three points $(a_\W,b_\W)$, $(0,0)$ and $(a_\E,b_\E)$ are located respectively at the intersection of $\Sigma_\J$ with the interface of the two-layered medium filling $\rmH_\J$.

We prove now \cref{Theorem: main} in the context described above. For the sake of clarity, we begin by the case of the right-angle configuration (see \cref{fig: cas non hom/4k}, left).

\begin{figure}[t]
		\centering
		\includegraphics[height=5cm]{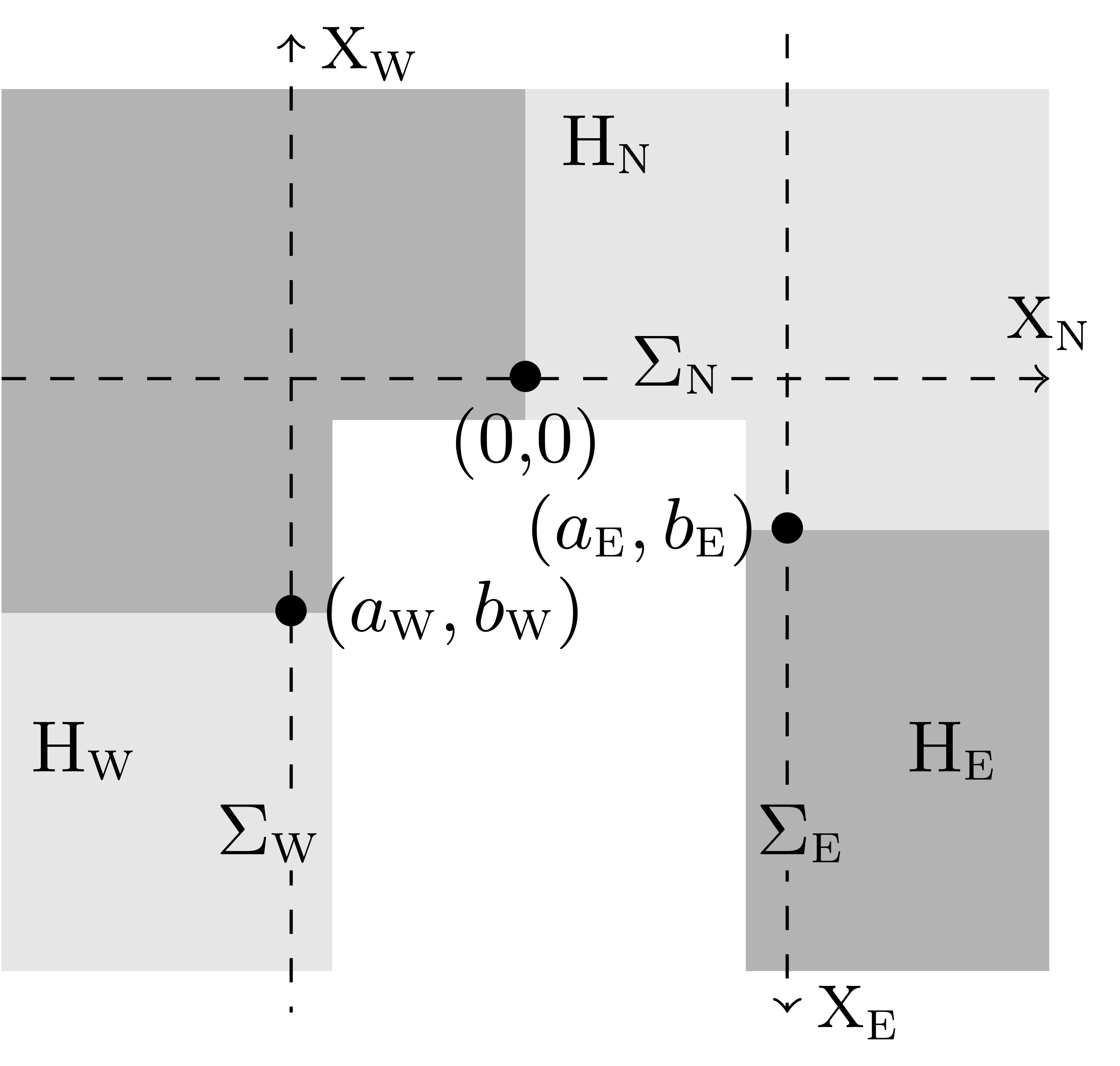}
	\hspace{.05\columnwidth}
	\includegraphics[height=5cm]{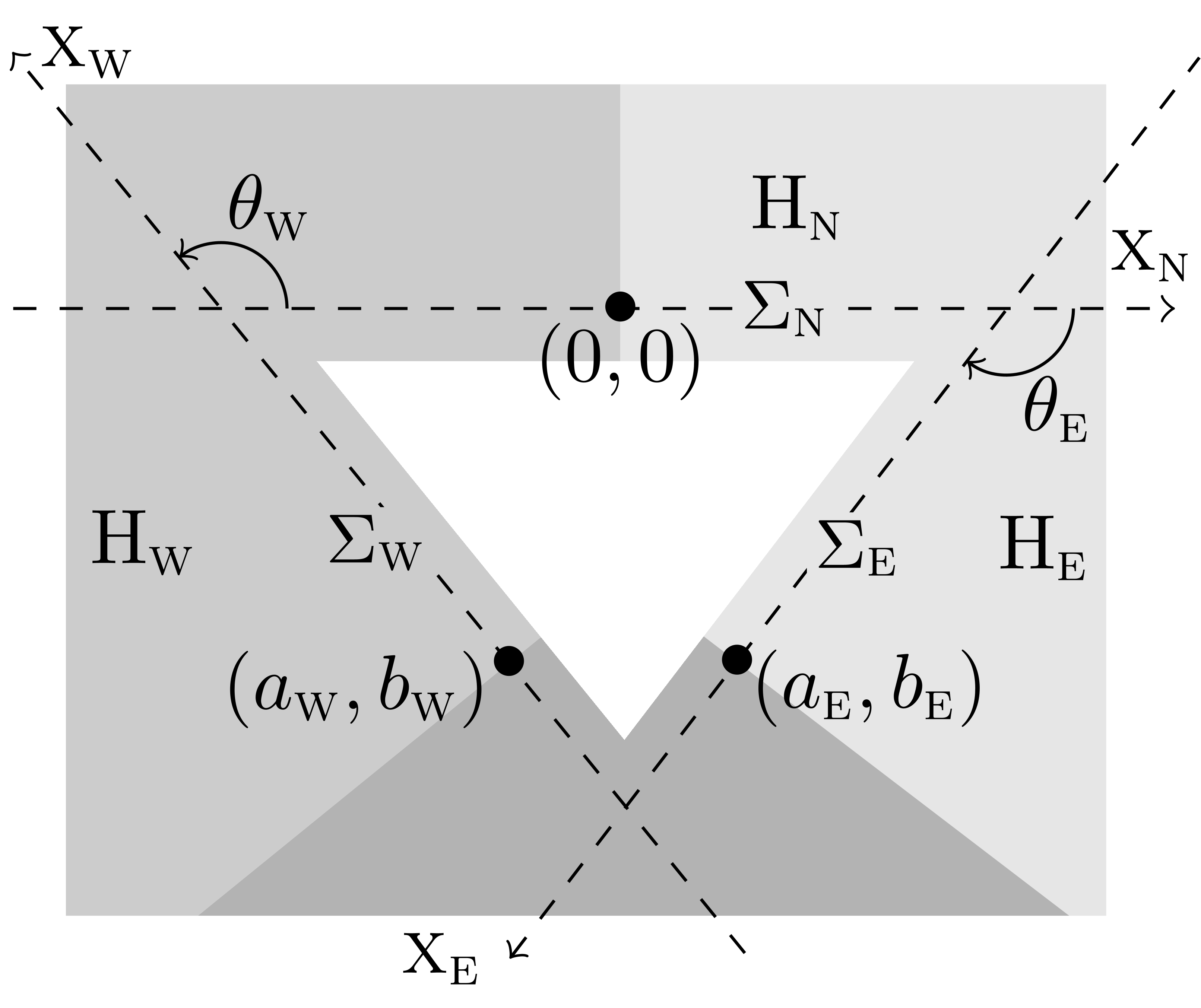}
\caption{Junctions of two-layered media.}\label{fig: cas non hom/4k}
\end{figure}
\subsection{Right-angle configuration} 
\label{subsection: right angle}
We follow the same approach as in the second step of \cref{section : homogeneous case}.
Recall that in the right-angle configuration, the relations between the local and global coordinate systems are given by \cref{eq: coord locales right angle}.

Suppose that $u \in L^2(\Omega)$ satisfies the Helmholtz equation in $\Omega := \rmH_\W \cup \rmH_\N \cup \rmH_\E$. Hence, we can use the half-plane representation of \cref{prop: HPR} for the restriction of $u$ in each of the three half-planes. For $\J \in \{\W,\N,\E\}$, we denote by  $\varphi_\J$ the trace of $u$ on $\Sigma_\J$, considered as a function of the local coordinate $\x_\J$, and by $(\Hphi_\J^-,\Hphi_\J^+) := \calF_\J \varphi_\J$ its generalized Fourier transform with respect to $\x_\J$. 

In order to prove that $u=0$, the key argument consists in proving that $\Hphi_\N^\pm(\lambda)$ extends to an analytic function of $\lambda$ in the domain
\begin{equation}
	\mathbb{D}_\N := \C \setminus (-\infty,-\min(\k_{\N,-}^2,\k_{\N,+}^2)]
	\label{eq:def-DN}
\end{equation}
introduced in \cref{prop: fpg}. As \cref{eq: HPR prop phi} tells us that $\Hphi_\N^\pm(\lambda)$ vanishes on the interval $(-\k_{\N,\pm}^2,0)$, analyticity implies that it vanishes also on $(0,+\infty)$, hence $\Hphi_\N^\pm = 0$. The half-plane representation \cref{eq: HPR} in $\rmH_\N$ then tells us that $u$ vanishes in $\rmH_\N$, so finally also in the whole domain $\Omega$ by virtue of the unique continuation principle.

It remains to prove that $\Hphi_\N^\pm(\lambda)$ has an analytic continuation in $\mathbb{D}_\N$. We focus on the case of $\Hphi_\N^+(\lambda)$. The case of $\Hphi_\N^-(\lambda)$ can be dealt with in the same way. We start from the definition \cref{eq : thm gft f} of  $\Hphi_\N^+(\lambda)$ (recall that $\x_\N=x$):
\begin{equation*}
\forall \lambda \in (-\k_{\N,+}^2,+\infty), \quad
\Hphi_\N^+(\lambda):=\int_\R\varphi_\N(x)\,\overline{\Psi_\N^+(\lambda,x)}\,\d x.
\end{equation*}
According to \cref{rem Fourier gen L2-L1}, this integral is well defined (in the Lebesgue sense) if $\varphi_\N \in L^1(\R)$, which is not obvious. This actually follows \cref{cor: HPR} applied for both half-planes $\rmH_\W$ and $\rmH_\E$, which ensures that $u|_{\Sigma_\N \cap \rmH_\W}$ and $u|_{\Sigma_\N \cap \rmH_\E}$ are integrable, that is, $\varphi_\N|_{(-\infty,a_\W)} \in L^1(-\infty,a_\W)$ and $\varphi_\N|_{(a_\E,+\infty)} \in L^1(a_\E,+\infty)$. We can then split the above integral in three parts as follows:
\begin{multline}
	\Hphi_\N^+(\lambda)= \\
	\underbrace{\int_{-\infty}^{a_\W}\varphi_\N(x)\,\overline{\Psi_\N^+(\lambda,x)}\,\d x}_{ \displaystyle =:\Hphi_{\N,\W}^+(\lambda)}
	+ \underbrace{\int_{a_\W}^{a_\E}\varphi_\N(x)\,\overline{\Psi_\N^+(\lambda,x)}\,\d x}_{ \displaystyle =:\Hphi_{\N,0}^+(\lambda)}
	+ \underbrace{\int_{a_\E}^{+\infty}\varphi_\N(x)\,\overline{\Psi_\N^+(\lambda,x)}\,\d x}_{ \displaystyle =:\Hphi_{\N,\E}^+(\lambda)}.
	\label{eq:split TphiN}
\end{multline}

First consider the second integral $\Hphi_{\N,0}^+(\lambda)$. Recall that \Cref{prop: fpg} tells us that for any $x \in \R$, the function $(-\k_{\N,+}^2,+\infty)\ni\lambda\to{\Psi_\N^+}(\lambda,x)$ has an analytic continuation in $\mathbb{D}_\N$ and $(\lambda,x)\to{\Psi_\N^+}(\lambda,x)$  is bounded in any compact set of $\mathbb{D}_\N \times \R$. Then, the function  $(-\k_{\N,+}^2,+\infty)\ni\lambda\to\overline{\Psi_\N^+(\lambda,x)}$ also has an analytic continuation in $\mathbb{D}_\N$, simply defined by $\overline{\Psi_\N^+(\overline\lambda,x)}$, and $(\lambda,x)\to\overline{\Psi_\N^+(\overline\lambda,x)}$  is bounded in any compact set of $\mathbb{D}_\N \times \R$. Using Morera's theorem, we infer that $\Hphi_{\N,0}^+(\lambda)$ has an analytic continuation in $\mathbb{D}_\N$.

These arguments cannot apply for $\Hphi_{\N,\W}^+(\lambda)$ nor $\Hphi_{\N,\E}^+(\lambda)$, since $\overline{\Psi_\N^+(\lambda,x)}$ is exponentially increasing as $x \to +\infty$ or $x \to -\infty$ for non-real $\lambda$. The idea is to use the half-plane representation in $\rmH_\W$ and $\rmH_\E$ respectively. Consider the case of $\Hphi_{\N,\W}^+(\lambda)$. Denoting $u_\W$ the restriction of $u$ to $\rmH_\W$, considered as a function of the local coordinates $(\x_\W,\y_\W)$, the half-plane representation \cref{eq: HPR} writes as
\begin{equation}
	u_\W(\x_\W,\y_\W) = \sum_\pm \int_{\R^+} \Hphi^\pm_\W(\mu)\,
	\Psi^\pm_\W(\mu,\x_\W) \,\e^{-\sqrt{\mu}\y_\W}\,\rho^\pm_\W(\mu)\,\d\mu,
	\label{eq: HPR in HW}
\end{equation}
for all $(\x_\W,\y_\W) \in \R \times [0,+\infty)$. According to \cref{eq: coord locales right angle}, for all $x \in (-\infty,a_\W),$ we have $\varphi_\N(x) = u(x,0) = u_\W(-b_\W,-x+a_\W)$. Hence, for $\lambda\in(-\k_{\N,+}^2,+\infty)$,
\begin{equation*}
	\Hphi_{\N,\W}^+(\lambda) = \int_{-\infty}^{a_\W} 
	\left( \sum_\pm \int_{\R^+} \Hphi^\pm_\W(\mu)\, 
	\Psi^\pm_\W(\mu,-b_\W) \,\e^{-\sqrt{\mu}(-x+a_\W)}\,\rho^\pm_\W(\mu)\,\d\mu \right) \overline{\Psi_\N^+(\lambda,x)}\,\d x .
\end{equation*}
In order to use Fubini's theorem, note that thanks to \cref{bound Psi}, we have
\begin{align*}
	& \int_{-\infty}^{a_\W} 
	\int_{\R^+} \Big| \Hphi^\pm_\W(\mu)\, 
	\Psi^\pm_\W(\mu,-b_\W) \,\e^{-\sqrt{\mu}(-x+a_\W)}\, \overline{\Psi_\N^+(\lambda,x)}\Big|\,\rho^\pm_\W(\mu)\,\d\mu\,\d x \\
	& \hspace{2cm} \leq 4 \int_{\R^+} 
	\left( \int_{-\infty}^{a_\W} \e^{-\sqrt{\mu}(-x+a_\W)}\,\,\d x \right) 
	\big| \Hphi^\pm_\W(\mu)\big|\, \rho^\pm_\W(\mu)\,\d\mu \\
	& \hspace{2cm} = 4 \int_{\R^+} 
	\frac{\big| \Hphi^\pm_\W(\mu)\big|}{\sqrt{\mu}} \, \rho^\pm_\W(\mu)\,\d\mu,
\end{align*}
which is bounded, according to \cref{eq: HPR prop phi} with $s=-1/2$. Switching the integrals in the above expression of $\Hphi_{\N,\W}^+(\lambda)$ and using the expression \cref{eq: def fpg} of $\Psi_\N^+(\lambda,x)$ then yields
\begin{multline*}
	\Hphi_{\N,\W}^+(\lambda) =  \\ \sum_\pm  
	\int_{\R^+}  T^+_\N(\lambda) \left( \int_{-\infty}^{a_\W}
	\e^{-\sqrt{\mu}(-x+a_\W) + \i\beta^-_\N(\lambda)\,x}\,\d x \right)
	\Hphi^\pm_\W(\mu)\,	\Psi^\pm_\W(\mu,-b_\W) \,\rho^\pm_\W(\mu)\,\d\mu,
\end{multline*}
since $T^+_\N(\lambda)$ and $\beta^-_\N(\lambda)$ are real on $(-\k_{\N,+}^2,+\infty)$. Computing the integral between parentheses, we finally obtain
\begin{equation}
	\Hphi_{\N,\W}^+(\lambda) =  T^+_\N(\lambda)\,\e^{\i\beta^-_\N(\lambda)\,a_\W} \,\sum_\pm  
	\int_{\R^+}  \frac{\Hphi^\pm_\W(\mu)\,\Psi^\pm_\W(\mu,-b_\W)}{\sqrt{\mu}+\i\beta^-_\N(\lambda)}
	\, \rho^\pm_\W(\mu)\,\d\mu.
	\label{eq: Tphi NW final}
\end{equation}
Now, we can use again Morera's theorem to conclude that $\Hphi_{\N,\W}^+(\lambda)$ has an analytic continuation in $\mathbb{D}_\N$, since $\sqrt{\mu}+\i\beta^-_\N(\lambda)$ cannot vanish for $\mu > 0$ and $\lambda \in \mathbb{D}_\N$.

We can proceed in a similar way for $\Hphi_{\N,\E}^+(\lambda)$. Using the half-plane representation in $\rmH_\E$, we obtain instead of \cref{eq: Tphi NW final}
\begin{equation*}
	\Hphi_{\N,\E}^+(\lambda) = \sum_\pm  \int_{\R^+} \left( 
	\frac{\e^{\i\beta^+_\N(\lambda)\,a_\E}}{\sqrt{\mu}-\i\beta^+_\N(\lambda)} + 
	R^+_\N(\lambda)\,\frac{\e^{-\i\beta^+_\N(\lambda)\,a_\E}}{\sqrt{\mu}+\i\beta^+_\N(\lambda)} \right)
	\,\Hphi^\pm_\E(\mu)\,\Psi^\pm_\E(\mu,b_\E) \,\rho^\pm_\E(\mu)\,\d\mu,
\end{equation*}
which also extends to an analytic function of $\lambda$ in $\mathbb{D}_\N$ thanks to \cref{prop: fpg}. This completes the proof.

\subsection{General angle configuration} 
\label{subsection: General angle}
We only detail here the items of the proof which differ from the right-angle configuration presented above. Indeed, the ideas are exactly the same, but the calculations are more complicated, since the relations between the local and global coordinate systems are given now by the general form \cref{eq: coord locales} with $(\theta_\W,\theta_\E ) \neq (+\pi/2,-\pi/2)$ (see \cref{fig: cas non hom/4k}, right). These relations show in particular that the intersection points of $\Sigma_\W \cap \Sigma_\N$ and $\Sigma_\E \cap \Sigma_\N$ are located respectively at $(a_{\N\W},0)$ and $(a_{\N\E},0)$ where
\begin{equation*}
	a_{\N\W} := a_\W - b_\W \cot\theta_\W
	\quad\text{and}\quad 
	a_{\N\E} := a_\E - b_\E \cot\theta_\E.
\end{equation*} 
Therefore the splitting \cref{eq:split TphiN} of $\Hphi_\N^+(\lambda)$ becomes here
\begin{multline*}
	\Hphi_\N^+(\lambda)= \\
	\underbrace{\int_{-\infty}^{a_{\N\W}}\varphi_\N(x)\,\overline{\Psi_\N^+(\lambda,x)}\,\d x}_{ \displaystyle =:\Hphi_{\N,\W}^+(\lambda)}
	+ \underbrace{\int_{a_{\N\W}}^{a_{\N\E}}\varphi_\N(x)\,\overline{\Psi_\N^+(\lambda,x)}\,\d x}_{ \displaystyle =:\Hphi_{\N,0}^+(\lambda)}
	+ \underbrace{\int_{a_{\N\E}}^{+\infty}\varphi_\N(x)\,\overline{\Psi_\N^+(\lambda,x)}\,\d x}_{ \displaystyle =:\Hphi_{\N,\E}^+(\lambda)},
\end{multline*}
As previously, the second integral $\Hphi_{\N,0}^+(\lambda)$ has an analytic continuation in $\mathbb{D}_\N$. But as we will see, this is no longer true for the two other integrals. We focus on $\Hphi_{\N,\W}^+(\lambda)$. The half-plane representation \cref{eq: HPR in HW} is still valid, but \cref{eq: coord locales} shows now that
\begin{align*}
	&\forall x \in (-\infty,a_{\N\W}),\quad 
	\varphi_\N(x)  = u(x,0) = u_\W\big(\x_\W(x),\y_\W(x) \big)
	\quad\text{where}\quad \\
	&\quad\quad \x_\W(x)  := (x-a_\W)\cos \theta_W-b_\W\sin\theta_\W
	\quad\text{and}\quad
	\y_\W(x) := (a_{\N\W}-x)\sin\theta_W.
\end{align*}
Therefore \cref{eq: HPR in HW} yields
\begin{equation*}
	\Hphi_{\N,\W}^+(\lambda) = \int_{-\infty}^{a_{\N\W}} 
	\left( \sum_\pm \int_{\R^+} \Hphi^\pm_\W(\mu)\, 
	\Psi^\pm_\W\big(\mu,\x_\W(x) \big)
	\,\e^{-\sqrt{\mu}\,\y_\W(x)}\,\rho^\pm_\W(\mu)\,\d\mu \right) \overline{\Psi_\N^+(\lambda,x)}\,\d x.
\end{equation*}
From Fubini's theorem, we obtain
\begin{align}
	\Hphi_{\N,\W}^+(\lambda) & = \sum_\pm \int_{\R^+}
	 A^\pm(\lambda,\mu)\,\Hphi^\pm_\W(\mu)\,\rho^\pm_\W(\mu)\,\d\mu
	\quad\text{where} \label{eq: TphiNW gen}\\
	A^\pm(\lambda,\mu) & := 
	 \int_{-\infty}^{a_{\N\W}} \e^{-\sqrt{\mu}\,\y_\W(x)}\,
	\Psi^\pm_\W\big(\mu,\x_\W(x)\big) \, \overline{\Psi_\N^+(\lambda,x)} \,\d x.
	\nonumber
\end{align}
Using the expression \cref{eq: def fpg} of $\Psi_\W^\pm$ and $\Psi_\N^+$ (note that $\x_\W(x) > 0$ for all $x \in (-\infty,a_{\N\W})$), as well as the fact that $\beta^+_\W(\mu)=\beta^-_\N(\mu)$ (see \cref{eq: k two-layer}), the calculation of the latter integral yields
\begin{align*}
	A^-(\lambda,\mu) 
	& = \frac{T^-_\W(\mu)\,\e^{+\i\beta^-_\N(\mu)\,\x_\W(a_{\N\W})}}
	{D^+(\lambda,\mu)}
	\,T^+_\N(\lambda)\,\e^{+\i\beta^-_\N(\lambda)\,a_{\N\W}} \\
	A^+(\lambda,\mu) 
	& = \left( 
	\frac{\e^{-\i\beta^-_\N(\mu)\,\x_\W(a_{\N\W})}}
	{D^-(\lambda,\mu)}
	+ \frac{R^+_\W(\mu)\,\e^{+\i\beta^-_\N(\mu)\,\x_\W(a_{\N\W})}}
	{D^+(\lambda,\mu)}
	\right)  T^+_\N(\lambda)\,\e^{+\i\beta^-_\N(\lambda)\,a_{\N\W}}
\end{align*}
where we have denoted
\begin{equation*}
	D^\pm(\lambda,\mu) :=
	\sqrt{\mu}\,\sin\theta_W \pm \i\beta^-_\N(\mu)\cos\theta_W + \i\beta^-_\N(\lambda).
\end{equation*}

For a fixed $\mu > 0$, the function $A^\pm(\lambda,\mu)$ has an analytic continuation in any complex subdomain of $\mathbb{D}_\N$ (see \cref{eq:def-DN}) in which the denominators $D^-(\lambda,\mu)$ and $D^+(\lambda,\mu)$ do not vanish. On the one hand, $D^-(\lambda,\mu)$ cannot vanish, for 
\begin{equation*}
	\forall \lambda\in \mathbb{D}_\N, \forall \mu > 0, \quad
	\Im\big( D^-(\lambda,\mu) \big) = 
	-\beta^-_\N(\mu)\cos\theta_W + \Re\big( \beta^-_\N(\lambda)\big) > 0,
\end{equation*}
since $\cos\theta_W \leq 0$ and $\Re\big( \beta^-_\N(\lambda)\big) > 0$ for all $\lambda\in \mathbb{D}_\N$ (according to our choice of the complex square root, see the proof of \cref{prop: fpg}). On the other hand, 
\begin{align*}
	D^+(\lambda,\mu)=0 
	& \Longleftrightarrow \sqrt{\lambda+\k_{\N,-}^2} = -\sqrt{\mu+\k_{\N,-}^2}\,\cos\theta_\W + 
	\i \sqrt{\mu}\,\sin\theta_\W \\
	& \Longleftrightarrow \lambda = 
	\left(\mu\,\cos(2\theta_\W) - \k_{\N,-}^2\,\sin^2\theta_\W\right)
	- \i\,\left(\sqrt{\mu}\,\sqrt{\mu+\k_{\N,-}^2}\,\sin(2\theta_\W) \right).
\end{align*}
The latter equation defines a curve $\Lambda_{\N,\W}$ parameterized by $\mu$ in the upper complex half-plane $\Im\,\lambda > 0$ (since $\sin(2\theta_\W)<0$, see \cref{eq: theta W E}), which reaches the real axis at $\lambda_{\N,\W} := -\k_{\N,-}^2\,\sin^2\theta_\W$ when $\mu = 0$. As a consequence, going back to \cref{eq: TphiNW gen}, we deduce from Morera's theorem that $\Hphi_{\N,\W}^+(\lambda)$ has an analytic continuation in $\mathbb{D}_\N \setminus\Lambda_{\N,\W}$ .

We proceed in the same way to deal with $\Hphi_{\N,\E}^+(\lambda)$. The only differences in the computations is that we use now the expression of $\Psi_\N^+(\lambda,x)$ for $x > 0$ (see \eqref{eq: def fpg}) and $\Psi^\pm_\E$ instead of $\Psi^\pm_\W$. We are then faced with the possible cancellation  of four denominators, which leads us to solve the following four equations:
\begin{equation*}
	\sqrt{\mu}\,\sin\theta_\E \pm \i\beta^+_\N(\mu)\cos\theta_E \pm \i\beta^+_\N(\lambda) = 0,
\end{equation*}
where both $\pm$ are independent. As above, only two of them have a solution, which yields
\begin{equation*}
	\lambda = 
	\left(\mu\,\cos(2\theta_\E) - \k_{\N,+}^2\,\sin^2\theta_\E\right)
	\pm \i\,\left(\sqrt{\mu}\,\sqrt{\mu+\k_{\N,+}^2}\,\sin(2\theta_\E) \right).
\end{equation*}
Instead of one curve, we then obtain two curves  $\Lambda_{\N,\E}^\pm$ which are complex-conjugate to each other and reach the real axis at $\lambda_{\N,\E} := -\k_{\N,+}^2\,\sin^2\theta_\E$. We conclude again that $\Hphi_{\N,\E}^+(\lambda)$ has an analytic continuation in $\mathbb{D}_\N \setminus \big(\Lambda_{\N,\E}^- \cup \Lambda_{\N,\E}^+\big)$.

To sum up, $\Tphi_{\N}^+(\lambda)$ has an analytic continuation in $\mathbb{D}_\N \setminus \big(\Lambda_{\N,\W} \cup \Lambda_{\N,\E}^- \cup \Lambda_{\N,\E}^+\big)$. The curves are represented in \cref{fig: analyticity curves}. We are interested in the analyticity of $\Tphi_{\N}^+(\lambda)$ in the gray area:  as $\Tphi_{\N}^+(\lambda)$ vanishes on the interval $(\max(\lambda_{\N,\W},\lambda_{\N,\E}),0)$, it implies that $\Tphi_{\N}^+(\lambda)$ also vanishes on $(0,+\infty)$.
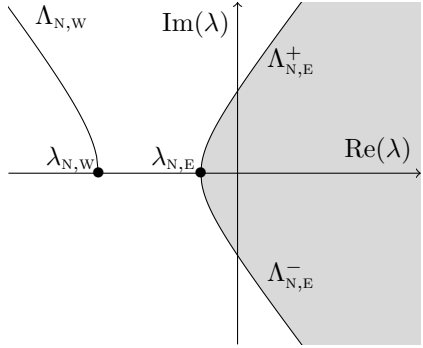
\begin{figure}[t]
	\centering
    \begin{tikzpicture}[]
		\begin{axis}[scale=.8,
				axis on top,
				xmin=-100,xmax=80,
				ymin=-100,ymax=100,
				ticks=none,
				xlabel={$\Re(\lambda)$},         
				ylabel={$\Im(\lambda)$},
				y label style={at={(0.35,1)}},
				]
			\addplot [name path=p,domain=0:300,samples=500]({x*cos(2*150)-(8*sin(150))^2},{sin(2*150)*sqrt(x*(x+64))}); 
				
			\addplot [name path=n,domain=0:300,samples=500]({x*cos(2*150)-(8*sin(150))^2},{-sin(2*150)*sqrt(x*(x+64))});
			
			\addplot [fill=gray!30]fill between[of=p and n];

			\addplot [domain=0:300,samples=200]({x*cos(2*120)-(9*sin(120))^2},{-sin(2*120)*sqrt(x*(x+81))});
			
			\node at (axis cs:{-64*(sin(150))^2},0) {{$\bullet$}};
			\node at (axis cs:{-81*(sin(120))^2},0) {{$\bullet$}};
			\node at (axis cs:{-64*(sin(150))^2-12},8) {{$\lambda_{\N,\E}$}};
			\node at (axis cs:{-81*(sin(120))^2-12},8) {{$\lambda_{\N,\W}$}};

			\node at (axis cs:{-49*(sin(60))^2+60},65) {{$\Lambda_{\N,\E}^+$}};
			\node at (axis cs:{-49*(sin(60))^2+60},-60) {{$\Lambda_{\N,\E}^-$}};
			\node at (axis cs:{-49*(sin(120))^2-40},90) {{$\Lambda_{\N,\W}$}};
		\end{axis}
	  \end{tikzpicture}\caption{Relevant domain of analyticity of $\Tphi_\N^+$ (gray area) for $\k_{\N,+}=10$, $\k_{\N,-}=9$, $\theta_\E=-5\pi/6$, $\theta_\W=2\pi/3$.}
	  \label{fig: analyticity curves}
\end{figure}

Finally, the same arguments apply to $\Tphi_{\N}^-(\lambda)$, so that \cref{Theorem: main} is now proved for a junction of two-layered media in the general angle configuration.

\begin{remark}
	Notice that when $(\theta_\W,\theta_\E)$ tends to $(+\pi/2,-\pi/2)$, the union of the curves represented in \cref{fig: analyticity curves} formally tends to the half-line $(-\infty,-\min(\k_{\N,-}^2,\k_{\N,+}^2)]$ on the real axis. We then recover the domain of analyticity $\mathbb{D}_\N$ of the right-angle configuration (see \cref{eq:def-DN}).
\end{remark}

\section{Extension to the general stratification case}
\label{section : general case}
We consider now the junction of general stratified media as described in \cref{setting pb}. We follow the same steps as previously, first focusing on a half-plane, explaining how the generalized Fourier transform is written in that case, detailing the properties of the half-plane representation and then using them to derive \cref{Theorem: main}.

\subsection{Generalized Fourier transform}\label{susbection: intro guided modes}
We consider again the unbounded selfadjoint operator $A$ defined by \cref{def: A} where $\k$ now  satisfies the general assumptions of \cref{setting pb}: it is a bounded function which is constant outside a bounded interval, more precisely $\k(\x) = \kpm $ if $\pm\x > \pm \x^\pm$. A main difference with the two-layered medium is that, in addition to the generalized eigenfunctions appearing previously, the operator may now have $L^2$ eigenfunctions. This means that there may exist pairs $(\lambda,\Psi) \in \R \times H^2(\R)\backslash\{0\}$ such that
\begin{equation}\label{eigen eq}
	-\partial_\x^2\Psi -\k^2\,\Psi = \lambda\,\Psi 
	\quad \text{in }\R.
\end{equation}
Using that $\Psi$ has to be decreasing where $\k$ is constant, we infer that there can exist such eigenpairs only for $\lambda<-\max(\km^2,\kp^2)$ otherwise non-trivial solutions to \cref{eigen eq} cannot be $L^2$. As this equation is one dimensional, all eigenvalues are of multiplicity one. Writing
\begin{equation*}
	\k_{\textsc M}^2:=\esssup\limits_{\x\in(\x^-,\x^+)}  (\k^2(\x)),
\end{equation*}
 and denoting $\Lambda^{\textsc P}$ the possibly empty point spectrum of $A$, multiplying \cref{eigen eq} by $\overline\Psi$ and integrating by parts, it follows that
 \begin{equation}\label{point spectrum}
    \Lambda^{\textsc P}\subset[-\k_{\textsc M}^2,-\max(\k_-^2,\k_+^2)]\subset \R^-.
\end{equation}
Then, an obvious necessary condition for the existence of eigenvalues is 
\begin{equation*}
    \k_{\textsc M}^2> \max(\k_-^2,\k_+^2).
\end{equation*}
 We can exhibit sufficient conditions ensuring the existence of at least one eigenpair. For instance, if $\km=\kp$, a sufficient condition \cite[Corollary 4.5.2]{Schechter-1981} is that $\k$ satisfies 
\begin{equation*}
	\int_\R(\k^2(\x)-\kp^2)\;\d\x>0.
\end{equation*}
Using the min-max principle, it is also possible to derive conditions on $\k$ ensuring the existence of any given number of eigenvalues. For instance, if $\k=\k_{\textsc M}$ on an interval of size $l$ and there exists a non-zero integer $n$ such that
\begin{equation*}
	\k_{\textsc M}^2-\max(\k_-^2,\k_+^2)>\frac{n^2\pi^2}{l^2},
\end{equation*}
then there exist at least $n$ eigenvalues.

Finally, it is proven in \cite[Theorem 4.7.1]{Schechter-1981} that there is at most a finite number of eigenvalues.

In what follows, for the sake of conciseness, we consider the situation where the point spectrum is not empty. The empty case then follows easily since we will see in \cref{prop phi gen} that the existence of eigenfunctions does not impact the core of the proof. We denote by $N\geq1$ the number of eigenvalues $(\lambda^n)_{n=1}^N$ and for all $n\in\llbracket1, N\rrbracket$, by $\Psi^n:\R\longrightarrow\C$ an associated eigenfunction normalized in $L^2(\R)$ associated to $\lambda^n$.

As previously, we also consider generalized eigenfunctions (which are bounded but not $L^2$ solutions to \cref{eigen eq}) for $\lambda>-\max(\km^2,\kp^2)$. The dimension of the space of bounded solutions is once again 1 if $-\max(\km^2,\kp^2) \leq \lambda < -\min(\km^2,\kp^2)$ and 2 if $\lambda \geq -\min(\km^2,\kp^2)$. The generalized eigenfunctions $\Psi^\pm : \Lambda^\pm\times\R \longrightarrow \C$ are now more complicated to describe. They are uniquely defined (see  \cref{proof analyticity}) as solutions to \cref{eigen eq} that have the following form outside of the interval $(\x^-,\x^+)$: $\forall (\lambda,\x) \in \Lambda^\pm\times\R,$
\begin{align}
	&\Psi^\pm(\lambda,\x) := \left\{\begin{array}{ll}
		\e^{\mp\i\beta^\pm(\lambda)\,\x} + R^\pm(\lambda)\,\e^{\pm\i\beta^\pm(\lambda)\,\x} &\text{if } \pm \x >\pm\x^\pm, \\[1mm]
		T^\pm(\lambda)\,\e^{\mp\i\beta^\mp(\lambda)\,\x} & \text{if }\mp \x>\mp\x^\mp,
	\end{array}\right.\label{eq: expression generale psis}
\end{align}  
for some complex coefficients $T^\pm(\lambda)$, $ R^\pm(\lambda)$. If $\k$ is piecewise constant, we can still retrieve explicit expressions for the generalized eigenfunctions in $(\x^-,\x^+)$ and for the associated coefficients $T^\pm(\lambda)$ and $R^\pm(\lambda)$ \cite[Section 2.3]{Ott-2017}, but in general, it is no longer possible. 

We are now ready to give the diagonalization theorem for the general stratification case. A proof can be found in \cite[Section 2]{Ott-2017}. The spectral space adapted to this situation writes as follows:
\begin{equation*}
    \widetilde\H :=L^2(\Lambda^+,\,\rho^+ )\times L^2(\Lambda^-,\,\rho^-)\times \C^N,
\end{equation*} and is  endowed with the norm
\begin{equation*}
    \forall \widehat\varphi:=(\widehat\varphi^+,\widehat\varphi^-,(\widehat\varphi^n)_{n=1}^N)\in\widetilde{\H},\quad||\widehat\varphi||_{\widetilde{\H}}^2:=||\widehat\varphi^+||_{L^2(\Lambda^+,\rho^+)}^2+||\widehat\varphi^-||_{L^2(\Lambda^-,\rho^-)}^2+\sum_{n=1}^N |\widehat\varphi^n|^2,
\end{equation*}
where we used the same notations as \cref{eq:def Lambda_pm} and \cref{eq: def rho} since the continuous spectrum is unchanged.
\begin{theorem}\label{theoreme fourier generalise cas gen}
    The generalized Fourier transform $\mathcal{F}$ defined for all $\varphi\in L^1(\R)\cap L^2(\R)$ by 
   \begin{align*}
	& \mathcal{F}\varphi := (\mathcal F^- \varphi,\mathcal F^+ \varphi,(\mathcal F^n \varphi)_{n=1}^N)\text{ with, for }\lambda\in\Lambda^\pm\text{ and }n\in\llbracket 1,N\rrbracket,\\
       &\mathcal F^\pm \varphi(\lambda):=\int_\R\varphi(\x)\overline{\Psi^\pm(\lambda,\x)}\;\d\x\quad\text{and}
      \quad \mathcal F^n \varphi := \int_\R\varphi(\x)\overline{\Psi^n(\x)}\;\d\x,
   \end{align*} 
   extends by density to a unitary operator from $L^2(\R)$ to $\widetilde\H$. It diagonalizes $A$ on $L^2(\R)$, in the sense  that
   \begin{equation*}
	   \forall \varphi\in H^2(\R),\quad  A\varphi=\calF^{-1} \lambda\,\calF\varphi, 
   \end{equation*}
   where the notation $\lambda$ stands for the operator of multiplication by $\lambda$ in $\widetilde\H$. Moreover, the inverse transform $\mathcal{F}^{-1}$ writes explicitely as follows: for all $(\widetilde\varphi^-,\widetilde\varphi^+,(\widetilde\varphi^{\,n})_{n=1}^N)\in \widehat{\H}$ such that $\widehat\varphi^\pm\in L^1(\Lambda^\pm,\rho^\pm)$, for all $\x\in\R,$
   \begin{equation*}
   \mathcal{F}^{-1} [(\widetilde\varphi^-,\widetilde\varphi^+,(\widetilde\varphi^{\,n})_{n=1}^N)](\x) = \sum_{n=1}^N\Psi^n(\x)\widetilde\varphi^{\,n}+ \sum_{\pm}\int_{- \kpm^2}^{+\infty}\Psi^\pm(\lambda,\x)\widetilde\varphi^{\,\pm}(\lambda)\rho^\pm(\lambda)\d\lambda.
   \end{equation*}
   \end{theorem}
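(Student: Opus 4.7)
The proof follows the architecture of \cref{th Fourier gen}, enriched by the presence of the point spectrum. My plan rests on the spectral theorem for $A$ realized concretely through its resolvent. As a first step, for each $\lambda$ in a complex neighbourhood of the continuous spectrum, I would prescribe the exterior form \cref{eq: expression generale psis} on one side, integrate the ODE \cref{eigen eq} through the bounded stratification interval $(\x^-, \x^+)$, and match to the ansatz on the opposite side. The matching determines $R^\pm(\lambda)$ and $T^\pm(\lambda)$ uniquely through a $2 \times 2$ linear system whose determinant is the Wronskian $W(\lambda)$ of two Jost-type solutions. Standard ODE theory yields analyticity of $W$ in $\lambda$ off the real axis, hence of $R^\pm$, $T^\pm$, $\Psi^\pm$, and a continuous extension to the continuous spectrum. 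The finitely many zeros of $W$ in $\R^-$ (finiteness is cited from \cite[Theorem 4.7.1]{Schechter-1981}) are the simple eigenvalues $\lambda^n$, and the corresponding $L^2$-normalized solutions are the eigenfunctions $\Psi^n$.

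The second and central step is to establish the Plancherel identity via the resolvent. The resolvent $(A - \lambda)^{-1}$ admits an explicit Green kernel $G(\x, \y; \lambda)$ assembled from the Jost solutions. Applying Stone's formula to the jump $\frac{1}{2\pi \i}[G(\cdot,\cdot;\lambda + \i 0) - G(\cdot,\cdot;\lambda - \i 0)]$ across the continuous spectrum and using the scattering identities that tie Jost solutions to $\Psi^\pm$, $R^\pm$, $T^\pm$, this jump simplifies to
\begin{equation*}
	\sum_\pm \Psi^\pm(\lambda, \x)\,\overline{\Psi^\pm(\lambda, \y)}\,\rho^\pm(\lambda)\,\mathbf{1}_{\Lambda^\pm}(\lambda),
\end{equation*}
with $\rho^\pm$ from \cref{eq: def rho}. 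Simultaneously, the residues of $G$ at the simple poles $\lambda^n$ contribute the rank-one projectors $\Psi^n \otimes \Psi^n$. Summing continuous density and discrete residues produces a resolution of the identity in $L^2(\R)$ whose integration against $\varphi \in L^1 \cap L^2$ is exactly $\|\varphi\|_{L^2}^2 = \|\calF \varphi\|_{\widetilde\H}^2$. By density, $\calF$ extends to a unitary operator $L^2(\R) \to \widetilde\H$, and the explicit inverse formula stated in the theorem is its adjoint under this spectral representation.

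The diagonalization $A \varphi = \calF^{-1} \lambda\, \calF \varphi$ for $\varphi \in H^2(\R)$ is obtained by two integrations by parts in $\calF^\pm(A\varphi)$ and $\calF^n(A\varphi)$, transferring $A$ onto the eigenfunctions and invoking $A\Psi^\pm = \lambda\, \Psi^\pm$ and $A\Psi^n = \lambda^n\, \Psi^n$; the boundary contributions at $\pm\infty$ vanish immediately for $\Psi^n$ by its $L^2$-decay, and for $\Psi^\pm$ after approximating $\varphi$ by compactly supported functions. The main obstacle is the algebraic reduction in the second step: deriving the bilinear kernel $\sum_\pm \Psi^\pm\,\overline{\Psi^\pm}\,\rho^\pm$ from the resolvent jump relies on scattering identities (reciprocity and energy conservation for the pair $(R^\pm, T^\pm)$) that must be established for general bounded $\k$, together with a careful treatment of the thresholds $\lambda = -\k_\pm^2$ where the continuous spectrum changes multiplicity and $\beta^\mp$ vanishes. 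The explicit formulas available in the two-layer case (\cref{eq: coeff refl transm}) are no longer at hand, so the argument must proceed at the level of Wronskians; this is classical and is carried out in detail in \cite[Section 2]{Ott-2017}.
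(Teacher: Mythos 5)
Your outline is correct and follows exactly the classical Titchmarsh--Weyl route (Jost solutions, Wronskian, Green kernel, Stone's formula for the continuous part plus residues at the Wronskian zeros for the point spectrum), which is precisely the construction the paper defers to by citing \cite[Section 2]{Ott-2017} rather than proving the theorem itself. No discrepancy to report.
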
 
\begin{remark}\label{remark: Psi gen bounded}By construction \cite[Lemma 2.4.8]{Ott-2017}, the $\Psi^\pm$'s are uniformly bounded on $\Lambda^\pm\times\R$ so that the formulas make sense for $L^1$ functions (as mentioned in \cref{rem Fourier gen L2-L1}). \end{remark}

\subsection{Half-space representation}
Let us point out that compared to the two-layered situation, two difficulties arise when generalizing the proof of \cref{Theorem: main}. Firstly, we now have to handle the $L^2$ eigenfunctions in addition to the generalized ones. Secondly, since we do not have explicit expressions of the generalized eigenfunctions anymore, it is more technical to derive the necessary analyticity properties.

We start by handling the first difficulty and we show that since all the eigenvalues are negative, the eigenfunctions do not contribute to the half-plane representation of $L^2$ solutions of the Helmholtz equation. 
More precisely, the following proposition states that the half-plane representation of an $L^2$ solution to \cref{eq Helmholtz} still writes as in \cref{eq: HPR} without the components associated to the eigenfunctions $\Psi^n$. 
\begin{proposition}\label{prop phi gen}
    Considering $\rmH$ a half-plane as defined in \cref{def half plane eps}, suppose that $u\in L^2(\rmH)$ satisfies
        \begin{equation}\label{eq: helmholtz prop phi gen}
             -\Delta u - \k^2 u = 0  \text{ in } \rmH
        \end{equation}
      in the distributional sense where $\k$ is defined as in \cref{susbection: intro guided modes}. 
	  
	  Then, writing $(\widetilde\varphi^-,\widetilde\varphi^+,(\widetilde\varphi^{\,n})_{n=1}^N):=\mathcal{F}\,u(\cdot,0)$, we have
	  \begin{equation}
	  \forall n\in \llbracket1,N\rrbracket,\quad\widetilde\varphi^{\,n}=0,\end{equation}
	  and, 
    \begin{equation} 
        \forall (\x, \y) \in \R\times[0,+\infty),\quad u(\x, \y) = \sum_{\pm}\int_{\R^+}\widetilde\varphi^{\,\pm}(\lambda)\Psi^\pm(\lambda,\x)\e^{-\sqrt{\lambda}\y}\rho^\pm(\lambda)\d\lambda,
    \end{equation}
    with $\widetilde\varphi^\pm$ satisfying \cref{eq: HPR prop phi}.
\end{proposition}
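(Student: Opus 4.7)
The plan mirrors the two-layered argument of \cref{prop: HPR}, with the additional task of ruling out any contribution from the discrete spectrum.

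For almost every $\y\in(-\varepsilon,+\infty)$, I would define the partial generalized Fourier transform
$\widetilde u(\cdot,\y):=\mathcal F u(\cdot,\y) = (\widetilde u^-(\cdot,\y),\widetilde u^+(\cdot,\y),(\widetilde u^{\,n}(\y))_{n=1}^N)$.
The unitary character of $\mathcal F$ on $L^2(\R)$ combined with Fubini's theorem then yields
\[
\|u\|_{L^2(\rmH)}^2 = \sum_\pm\int_{\Lambda^\pm}\|\widetilde u^\pm(\lambda,\cdot)\|_{L^2(-\varepsilon,+\infty)}^2\,\rho^\pm(\lambda)\,\d\lambda + \sum_{n=1}^N\|\widetilde u^{\,n}\|_{L^2(-\varepsilon,+\infty)}^2,
\]
which in particular ensures that each $\widetilde u^{\,n}\in L^2(-\varepsilon,+\infty)$ and that $\widetilde u^\pm(\lambda,\cdot)\in L^2(-\varepsilon,+\infty)$ for a.e. $\lambda\in\Lambda^\pm$.

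I would then apply $\mathcal F$ to \cref{eq: helmholtz prop phi gen} in the distributional sense, using test functions with separated variables $\psi(\x)\chi(\y)$ exactly as in the proof of \cref{prop: HPR} and exploiting the diagonalization identity $\mathcal F(A\psi)=\lambda\,\mathcal F\psi$ from \cref{theoreme fourier generalise cas gen}. This produces the ODE
$-\partial_\y^2\widetilde u^\pm(\lambda,\cdot) + \lambda\,\widetilde u^\pm(\lambda,\cdot)=0$ in $(-\varepsilon,+\infty)$ for a.e. $\lambda\in\Lambda^\pm$, together with
$-\partial_\y^2\widetilde u^{\,n} + \lambda^n\,\widetilde u^{\,n}=0$ in $(-\varepsilon,+\infty)$ for each $n\in\llbracket 1,N\rrbracket$.

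The continuous components $\widetilde u^\pm$ are handled verbatim as in \cref{prop: HPR}: the $L^2$ constraint in $\y$ forces $\widetilde u^\pm(\lambda,\y)=\widetilde\varphi^\pm(\lambda)\,\e^{-\sqrt{\lambda}\y}$ with $\widetilde\varphi^\pm(\lambda)=0$ on $(-\k_\pm^2,0)$, and the integrability property in \cref{eq: HPR prop phi} follows from the same Cauchy--Schwarz estimate. The novelty concerns the discrete components: by \cref{point spectrum} each $\lambda^n<0$, so the $n$th ODE has general solution
$\widetilde u^{\,n}(\y) = A^n\cos(\sqrt{-\lambda^n}\,\y) + B^n\sin(\sqrt{-\lambda^n}\,\y)$,
a bounded oscillating function that can lie in $L^2(-\varepsilon,+\infty)$ only when $A^n=B^n=0$. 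Hence $\widetilde u^{\,n}\equiv 0$ on $(-\varepsilon,+\infty)$ and, in particular, $\widetilde\varphi^{\,n}=\widetilde u^{\,n}(0)=0$. Inverting $\mathcal F$ via the explicit formula of \cref{theoreme fourier generalise cas gen} then gives the announced half-plane representation.

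The principal technical obstacle is the rigorous derivation of the above ODEs, which demands interchanging $\mathcal F$ with the antiduality bracket in the $\y$-variable; this is secured by first verifying the identity on separated-variables test functions $\psi\chi$ (where the action of $\mathcal F$ is transparent) and then extending by density, exactly as in the two-layered proof. Once this is granted, the rest is routine and the key structural input is simply the negativity of the discrete eigenvalues, which forces all eigenfunction components to vanish.
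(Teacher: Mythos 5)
Your proposal is correct and follows essentially the same route as the paper's proof: reduce to the ODEs in $\y$ via separated test functions as in \cref{prop: HPR}, treat the continuous components verbatim, and kill the discrete components by noting that $\lambda^n<0$ makes the corresponding solutions purely oscillatory and hence not in $L^2(-\varepsilon,+\infty)$ unless they vanish. The only cosmetic difference is that you write these solutions with $\cos$ and $\sin$ where the paper uses the exponentials $\e^{\mp(\y+\varepsilon)\sqrt{\lambda^n}}$ with its convention $\sqrt{\lambda}=\i\sqrt{-\lambda}$ for $\lambda<0$.
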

\begin{proof}
Following the same reasoning as in the proof of \cref{prop: HPR}, we can define a family of $L^2$ functions $\widetilde u^{\,n}(\y):=\mathcal{F}^{\,n} u(\cdot,\y)$ for almost every $\y\in(-\varepsilon,+\infty)$ and for $n\in\llbracket1,N\rrbracket$. Then applying the partial generalized Fourier transform in the $\x$-direction to the Helmholtz equation \cref{eq: helmholtz prop phi gen}, we have that for all $n\in\llbracket1,N\rrbracket$,
\begin{equation*}
    -{\partial^2_\y\widetilde u^{\,n}} + \lambda^n \widetilde u^{\,n} = 0\text{ on }(-\varepsilon,+\infty)
 \end{equation*}
 in the distributional sense. Then, there exist constants $ \widetilde A^{\,n}$, $ \widetilde B^{\,n}\in\C$ such that
\begin{equation*}
    \widetilde u^{\,n}(\y) = \widetilde A^{\,n} e^{-(\y+\varepsilon)\sqrt{\lambda^n}} + \widetilde B^{\,n} e^{+(\y+\varepsilon)\sqrt{\lambda^n}}.
\end{equation*}
 Since $\widetilde u^{\,n}\in L^2(-\varepsilon,+\infty)$ and since all the eigenvalues are negative by virtue of \cref{point spectrum}, we have that $\widetilde A^{\,n}=0$ and $\widetilde B^{\,n} = 0$. 

The properties on $\widetilde\varphi^\pm$ are then derived as in \cref{prop: HPR}. 
\end{proof}

The conclusion of \cref{cor: HPR} still holds for the hypotheses of \cref{prop phi gen} by virtue of \cref{remark: Psi gen bounded}
.

To proceed as in the proofs detailed in \cref{sec: two-layer}, we need an additional analyticity  result on the general eigenfunctions for which we cannot rely on explicit expressions anymore. Recalling $\mathbb D$ defined in \cref{eq def D}, we have the following.
\begin{proposition}\label{prop analyticity}
The mappings  $\lambda\longrightarrow R^\pm(\lambda)$ and 
$\lambda\longrightarrow T^\pm(\lambda)$ 
corresponding to the coefficients of the generalized eigenfunctions outside $(\x^-,\x^+)$ as defined in \cref{eq: expression generale psis} have meromorphic continuations in $\mathbb{D}_\N$. Moreover, for any $\x\in\R$, the mappings to the generalized eigenfunctions $\lambda\longrightarrow \Psi^\pm(\lambda,\x)$ 
 also have meromorphic continuations in $\mathbb{D}_\N$ with poles independent from $\x$. Finally, the meromorphic continuations
of $\Psi^\pm$ 
are bounded in any compact set of $\mathbb D\backslash P\times\R$ where $P$ denotes their set of poles.
\end{proposition}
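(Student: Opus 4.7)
My plan is to build the generalized eigenfunctions out of four Jost-type fundamental solutions of the ODE \cref{eigen eq}, and to express the reflection/transmission coefficients as ratios of Wronskians of those solutions. The meromorphic continuation in $\lambda$ will then reduce to the classical theorem on analytic dependence of ODE solutions on a parameter. Throughout, the natural domain is $\mathbb{D}$ of \cref{eq def D}, since it is precisely where both $\beta^+(\lambda)$ and $\beta^-(\lambda)$ are analytic.

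First, I would define $J^+_{\mathrm{in}}(\lambda,\cdot)$ and $J^+_{\mathrm{out}}(\lambda,\cdot)$ as the unique solutions of \cref{eigen eq} equal respectively to $\e^{-\i\beta^+(\lambda)\x}$ and $\e^{+\i\beta^+(\lambda)\x}$ for $\x>\x^+$, and symmetrically $J^-_{\mathrm{in}}, J^-_{\mathrm{out}}$ equal respectively to $\e^{+\i\beta^-(\lambda)\x}$ and $\e^{-\i\beta^-(\lambda)\x}$ for $\x<\x^-$. Each is the solution of a Cauchy problem on $\R$ whose initial data at $\x=\x^\pm$ depend analytically on $\lambda\in\mathbb{D}$ and whose ODE depends linearly on $\lambda$. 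The classical theorem on analytic dependence of ODE solutions on an analytic parameter (see the obstacle below) then yields, for every fixed $\x\in\R$, analyticity of $\lambda\mapsto J^\pm_{\mathrm{in/out}}(\lambda,\x)$ on $\mathbb{D}$, together with joint continuity on $\mathbb{D}\times\R$.

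Next, the prescribed form \cref{eq: expression generale psis} of $\Psi^+$ reads as $J^+_{\mathrm{in}}+R^+(\lambda)\,J^+_{\mathrm{out}}$ for $\x>\x^+$ and as $T^+(\lambda)\,J^-_{\mathrm{out}}$ for $\x<\x^-$. Both sides of the resulting identity satisfy the ODE on $\R$, so by uniqueness for the Cauchy problem the identity would hold on all of $\R$. Expanding $J^-_{\mathrm{out}}$ in the basis $(J^+_{\mathrm{in}},J^+_{\mathrm{out}})$ and using $W(J^+_{\mathrm{in}},J^+_{\mathrm{out}})=2\i\beta^+(\lambda)$ (computed for $\x>\x^+$, nonvanishing on $\mathbb{D}$), Cramer's rule would produce
\begin{equation*}
T^+(\lambda)=\frac{2\i\beta^+(\lambda)}{W(J^-_{\mathrm{out}},J^+_{\mathrm{out}})(\lambda)},\qquad R^+(\lambda)=\frac{W(J^+_{\mathrm{in}},J^-_{\mathrm{out}})(\lambda)}{W(J^-_{\mathrm{out}},J^+_{\mathrm{out}})(\lambda)}.
\end{equation*}
Since the Wronskians are $\x$-independent and analytic on $\mathbb{D}$, these formulas yield meromorphic continuations of $R^+$ and $T^+$ to $\mathbb{D}$ whose poles are contained in the zero set $P\subset\mathbb{D}$ of $W(J^-_{\mathrm{out}},J^+_{\mathrm{out}})$; the treatment of $R^-, T^-$ is symmetric and gives the same $P$. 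Inserting these continuations into $\Psi^+(\lambda,\x)=J^+_{\mathrm{in}}(\lambda,\x)+R^+(\lambda)\,J^+_{\mathrm{out}}(\lambda,\x)$ (an identity valid on all of $\R$) then provides the meromorphic continuation of $\Psi^+(\cdot,\x)$ to $\mathbb{D}$, with $\x$-independent poles in $P$. Boundedness on any compact $K\subset(\mathbb{D}\setminus P)\times\R$ follows by combining (i) uniform bounds for $R^\pm,T^\pm$ on the compact projection of $K$ in $\mathbb{D}\setminus P$ with (ii) uniform bounds for the Jost solutions on $K$ coming from joint continuity.

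The delicate ingredient is the analytic dependence on $\lambda$ of the Jost solutions claimed above. Since $\k$ is only assumed bounded, I would recast each Cauchy problem as a Volterra integral equation on $[\x^-,\x^+]$ (outside this interval the Jost solutions are explicit exponentials) and run Picard iteration: the iterates are polynomials in $\lambda$ whose coefficients are repeated integrals of $\k^2$, the series converges normally on any compact of $\mathbb{D}\times[\x^-,\x^+]$, and Morera's theorem transfers analyticity in $\lambda$ and joint continuity in $(\lambda,\x)$ to the limit. Once this is secured, the remainder of the argument is essentially algebraic.
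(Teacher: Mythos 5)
Your proof is correct and reaches the same conclusion by a genuinely different, though closely related, construction. The paper works with the canonical fundamental system $c(\lambda,\cdot)$, $s(\lambda,\cdot)$ fixed by Cauchy data at $\x^-$ (which are \emph{entire} in $\lambda$, quoted from Kirsch), reformulates the defining property of $\Psi^\pm$ as a pair of Robin conditions at $\x^\pm$, and obtains the coefficients $A^\pm(\lambda),B^\pm(\lambda)$ of $\Psi^\pm$ in that basis by Cramer's rule applied to a $2\times2$ system whose entries are analytic on $\mathbb{D}$; meromorphy with $\x$-independent poles and the compact bounds then follow exactly as in your last step, and $R^\pm,T^\pm$ are read off afterwards from the form of $\Psi^\pm$ outside $(\x^-,\x^+)$. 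You instead use Jost solutions normalized outside $(\x^-,\x^+)$ and express $T^\pm,R^\pm$ directly as ratios of $\x$-independent Wronskians, proving analyticity of the Jost solutions yourself via a Volterra/Picard argument (which does work for $\k\in L^\infty$). What the paper's choice buys is that the fundamental system is entire and parameter-independent of the branch cuts, so all the $\lambda$-dependence of the continuation is isolated in one explicit $2\times2$ determinant; what yours buys is the classical scattering-theoretic formulas $T^+=2\i\beta^+/W(J^-_{\mathrm{out}},J^+_{\mathrm{out}})$, etc., with the pole set $P$ identified as the zero set of a single Wronskian for all four coefficients -- the paper itself notes in \cref{Annex: analysis arguments} that this scattering route is an admissible alternative. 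One point you should add if your argument is to replace the appendix in full: the paper also proves (by a Wronskian positivity/contradiction argument) that the determinant does \emph{not} vanish for $\lambda\in\Lambda^\pm$. In your language this is the statement $P\cap\Lambda^\pm=\emptyset$, i.e.\ $W(J^-_{\mathrm{out}},J^+_{\mathrm{out}})(\lambda)\neq0$ for real $\lambda$ in the continuous spectrum. This is needed both so that $\Psi^\pm$, $R^\pm$, $T^\pm$ are actually well defined on all of $\Lambda^\pm$ as asserted in \cref{eq: expression generale psis} (so that your formulas are continuations of those functions), and downstream so that the poles stay away from the interval $(-\eta,+\infty)$ on which the isolated-zeros argument of the main proof is run.
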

\begin{proof}
	The idea is to prove that the $\Psi^\pm(\cdot,\x)$'s are meromorphic for all $\x\in[\x^-,\x^+]$ which directly yields the same property for the coefficients $R^\pm(\lambda)$ and $T^\pm(\lambda)$ and then for any $\x\in\R$. The proof is given in \cref{proof analyticity}.
\end{proof}

\subsection{Proof of \cref{Theorem: main}}
    We use the same notations as in the previous proofs. We can once again derive the following expression:
	\begin{multline*}
		\Hphi_\N^+(\lambda)= \\
		\underbrace{\int_{-\infty}^{a_{\N\W}}\varphi_\N(x)\,\overline{\Psi_\N^+(\lambda,x)}\,\d x}_{ \displaystyle =:\Hphi_{\N,\W}^+(\lambda)}
		+ \underbrace{\int_{a_{\N\W}}^{a_{\N\E}}\varphi_\N(x)\,\overline{\Psi_\N^+(\lambda,x)}\,\d x}_{ \displaystyle =:\Hphi_{\N,0}^+(\lambda)}
		+ \underbrace{\int_{a_{\N\E}}^{+\infty}\varphi_\N(x)\,\overline{\Psi_\N^+(\lambda,x)}\,\d x}_{ \displaystyle =:\Hphi_{\N,\E}^+(\lambda)}.
	\end{multline*}
By virtue of the definitions of the three stratified half-planes detailed in the introduction, we necessarily have $[\x_\N^-,\x_\N^+]\subset[a_{\N\W},a_{\N\E}]$. Then, $\Psi_\N^+$ can be expressed semi-explicitely as in \cref{eq: expression generale psis} in both $\Hphi_{\N,\W}^+$ and $\Hphi_{\N,\E}^+$. The fact that there is no explicit expression of $\Psi_\N^+$ in $(a_{\N\W},a_{\N\E})$ does not matter. Indeed, since $\Hphi_{\N,0}^+$ is defined by an integral on a bounded interval, by virtue of \cref{prop analyticity}, we can use Morera's theorem as in \cref{sec: two-layer} to show that this term is meromorphic. Moreover, we also have $\x_\W(x)>\x_\W^+$ on $(-\infty,a_{\N\W})$ and $\x_\E(x)<\x_\E^-$ on $(a_{\N\E},+\infty)$ so that the $\Psi^+_\J$'s, $\J\in\{\E,\W\}$, both have semi-explicit expressions of the form \cref{eq: expression generale psis} where they are considered. This allows us to detail $\Hphi_{\N,\W}^+$ and $\Hphi_{\N,\E}^+$ as previously. Then, proceeding as in \cref{sec: two-layer} and using \cref{prop analyticity}, we get that $\widetilde\varphi_\N(\lambda)$ has a meromorphic extension to $\mathbb{D}_\N \setminus \big(\Lambda_{\N,\W} \cup \Lambda_{\N,\E}^- \cup \Lambda_{\N,\E}^+\big)$. Consequently,  we can define a neighbourhood of $(-\eta,+\infty)$, for some $\eta>0$, where $\widetilde\varphi_\N(\lambda)$ is analytic and conclude as previously.

\section{Conclusion}
In this paper, we proved that in media that can be described as the union of stratified half-planes, there does not exist non-trivial square-integrable solutions to the two-dimensional Helmholtz equation. The tools used in the proof led to considering angles between the stratifications greater than $\pi/2$. In particular, the absence of trapped modes at the junction of waveguides when the angle between at least two branches is smaller than $\pi/2$ is an open question. We conjecture that the result still holds and that this limitation is purely technical, due to the use of separation of variables in half-planes.

Interestingly, our result still holds without imposing that $k^2$ be positive everywhere. Indeed, our proof only requires the positivity of $\k_{\N,\pm}^2$, which means that $\k^2_{\E,+}$ and $\k_{\W,-}^2$ can be negative.
In other words, even if waves do not propagate in the southern quadrants of $\rmH_\E$ and $\rmH_\W$, the  presence of propagating waves in $\rmH_\N$ is sufficient to prevent the existence of  trapped modes. Let us point out that without information coming from the southern quadrants of $\rmH_\E$ and $\rmH_\W$, we cannot conclude. Indeed there exist non-trivial $L^2$ solutions to the Helmholtz equation in a half-plane, see \cite{Bonnet-Fliss-Hazard-Tonnoir-2011} for the homogeneous case.

Note that we could have considered instead of the Helmholtz equation \cref{eq Helmholtz} a more general equation of the type\begin{equation*}
	-\textrm{div}(\mu\nabla u)-\rho u = 0.
\end{equation*}
Our result remains valid as soon as $\mu$ and $\rho$ satisfy the hypotheses given on $k^2$ in this paper, provided that $\mu$ is bounded from below by a positive constant. As mentioned above, the case of negative $\rho$ in the southern part of the domain  is also allowed.

 Let us now consider the Helmholtz equation in a three-dimensional domain  $\Omega\times\R$ where $\Omega\subset\R^2$ and 
$k=k(x,y)$ independent from the third coordinate $z$ both satisfy the hypotheses of \cref{setting pb}. Our
result means that there cannot exist so-called guided waves, i.e. non-trivial solutions of the form
\begin{equation*}
	u(x,y,z)=\hat{u}(x,y)\e^{\i\xi z}
\end{equation*} with $\hat{u}\in L^2(\Omega)$  and $\xi\in\R$
if $\xi^2<\min(\k^2_{\N,+},\k^2_{\N,-})$. 

A natural extension would be to apply  our approach to three-dimensional domains that
are the union of half-spaces (five in a right-angle configuration and four otherwise), such that in each half-space, $k$  depends
only on the transverse coordinates. However, this extension is far from straightforward as adapting the proof raises several difficulties. 
In particular, \cref{cor: HPR} does not directly extend to higher dimensions. Again, it is possible that this limitation is only due to the technique that we use. Indeed, in the homogeneous case, this difficulty was circumvented by a tricky use of the two-dimensional result (see \cite{Bonnet-Fliss-Hazard-Tonnoir-2011}). Unfortunately, this approach does not extend to the heterogeneous case we are interested in here.

It could be interesting to investigate links with the results concerning the so-called Landis' conjecture \cite{fernándezbertolin2024landisconjecturesurvey} which also applies to the Helmholtz equation with non-constant coefficients in exterior domains. This conjecture provides an upper bound for the rate of exponential decay at infinity of trapped modes.

\appendix
\section{\nopunct}\label{proof analyticity}
In this appendix, we first prove the existence and uniqueness of the generalized eigenfunctions $\Psi^\pm(\lambda,\cdot)$, introduced as the bounded solutions to \cref{eigen eq} satisfying \cref{eq: expression generale psis} for $\lambda\in\Lambda^\pm$, and then we prove \cref{prop analyticity}.

To prove that the $\Psi^\pm$'s are uniquely defined, we notice that they can be defined as the solutions to \cref{eigen eq} which satisfy the following Robin conditions at $\x^\pm$.
For $\Psi^+$ we have,
\begin{equation}\label{Robin psi+}
	\begin{array}{l}
	\partial_\x\Psi^+(\lambda,\x^-)+\i\beta^-(\lambda)\Psi^+(\lambda,\x^-) = 0,\\
	\partial_\x\Psi^+(\lambda,\x^+)-\i\beta^+(\lambda)\Psi^+(\lambda,\x^+) = -2\i\beta^+(\lambda) \e^{-\i\beta^+(\lambda)\x^+},
	\end{array}
\end{equation}
and for $\Psi^-$  (notice that only the right-hand side changes),
\begin{equation}\label{Robin psi-}
	\begin{array}{l}\partial_\x\Psi^-(\lambda,\x^-)+\i\beta^-(\lambda)\Psi^-(\lambda,\x^-) = 2\i\beta^-(\lambda) \e^{\i\beta^-(\lambda)\x^+},\\
		\partial_\x\Psi^-(\lambda,\x^+)-\i\beta^+(\lambda)\Psi^-(\lambda,\x^+) = 0.
	\end{array}
\end{equation}
The idea is to show constructively that the generalized eigenfunctions can be defined as linear combinations of entire canonic solutions to an ODE and derive the different properties from this description. 

 Let us then introduce, for $\lambda$ in $\mathbb C$, $s(\lambda,\cdot)$ and $c(\lambda,\cdot)$ the two solutions in $C^1(\R)$ to
\begin{equation*}
	-\partial_\x^2 u(\lambda,\x) - (\k^2(\x)+\lambda) u(\lambda,\x) = 0\quad\text{in }\R
\end{equation*}
in the distributional sense, such that
\begin{align*}
            c(\lambda,\x^-) = 1 \quad\text{and}\quad \partial_\x c(\lambda,\x^-)=0,\\
			s(\lambda,\x^-) = 0 \quad\text{and}\quad\partial_\x s(\lambda,\x^-)=1.
\end{align*}

It is well-known that for all $\x\in\R$, $s(\cdot,\x)$ and $c(\cdot,\x)$ are entire functions (see for instance \cite[Section 4.2]{Kirsch-2011}). 

Since the $\Psi^\pm$'s satisfy the same equation as $c$ and $s$, there exist complex coefficients $A^\pm(\lambda)$ and $B^\pm(\lambda)$ such that
\begin{equation}\label{eq: decomposition Psi}
\forall\lambda\in\Lambda^\pm,\;\forall \x\in\R,\quad	\Psi^\pm(\lambda,\x)=A^\pm(\lambda)s(\lambda,\x)+B^\pm(\lambda)c(\lambda,\x).
\end{equation}
The Robin conditions \cref{Robin psi+,Robin psi-} on $\Psi^\pm$ then yield the following system
\begin{align*}
	\begin{pmatrix}
		1&\i\beta^-(\lambda)\\\partial_\x s(\lambda,\x^+)-\i\beta^+(\lambda){s}(\lambda,\x^+) & \partial_\x c(\lambda,\x^+)-\i\beta^+(\lambda){c}(\lambda,\x^+)
	\end{pmatrix}\begin{pmatrix}
		A^\pm(\lambda)\\B^\pm(\lambda)
	\end{pmatrix}& ={L^\pm}(\lambda),
\end{align*}
where ${L^\pm}(\lambda)$ only depend on $\beta^\pm(\lambda)$.

To prove that the $\Psi^\pm$'s are uniquely defined for $\lambda\in\Lambda^\pm$, it suffices to prove that the determinant of the above matrix never vanishes. We give the proof for  $\lambda\in(-\min(\kp^2,\km^2),+\infty)$ where both $\beta^\pm(\lambda)$ are real. It can easily be extended to $\lambda\in(-\max(\kp^2,\km^2),-\min(\kp^2,\km^2))$ where one of the $\beta^\pm(\lambda)$'s is imaginary. 

Let us assume by contradiction that the determinant vanishes for some $\lambda\in(-\min(\kp^2,\km^2),+\infty)$. This yields, as both $s(\lambda,\cdot)$ and $c(\lambda,\cdot)$ are real-valued for all $\lambda\in\R$,
\begin{equation}\label{matrix determinant}
	\forall \lambda\in(-\min(\kp^2,\km^2),+\infty),\;\left\{\begin{array}{l}\displaystyle
		s(\lambda,\x^+)=\frac{\partial_\x c(\lambda,\x^+)}{\beta^-(\lambda)\beta^+(\lambda)},\\[2mm]\displaystyle
		\partial_\x s(\lambda,\x^+)=-\frac{\beta^+(\lambda)c(\lambda,\x^+)}{\beta^-(\lambda)}	.
	\end{array}\right.
\end{equation}
Consider now the wronskian of $c$ and $s$ defined by 
\begin{equation*}
\forall \lambda\in\mathbb C,\;\forall\x\in\R,\quad	W(\lambda,\x):=c(\lambda,\x) \partial_\x s(\lambda,\x)-s(\lambda,\x) \partial_\x c(\lambda,\x).
\end{equation*}
 It is a constant function of $\x$. By definition of $c$ and $s$, $W(\lambda,\x^-)=1$, but \cref{matrix determinant} yields
\begin{equation*}
	\forall \lambda\in(-\min(\kp^2,\km^2),+\infty),\quad W(\lambda,\x^+) = -\frac{\beta^+(\lambda)(c(\lambda,\x^+))^2}{\beta^-(\lambda)}-\frac{(\partial_\x c(\lambda,\x^+))^2}{\beta^-(\lambda)\beta^+(\lambda)}<0
\end{equation*}which gives a contradiction.

Now, we prove \cref{prop analyticity}. Notice that the coefficients of the matrix and the right-hand term can analytically be extended to $\mathbb D$. Since the determinant can vanish on $\mathbb D$ outside of $\Lambda^\pm$, the solutions of the linear system $(A^\pm(\lambda),B^\pm(\lambda))$ have meromorphic continuations and have poles whenever the determinant vanishes. It follows that for every $\x\in[\x^-,\x^+]$, the $\Psi^\pm(\cdot,\x)$'s now have a meromorphic continuation to $\mathbb D$ with poles independent from $\x$.

Finally, the fact that the $\Psi^\pm$'s are bounded on any compact of $\mathbb D \times \R$ where they are analytic follows directly from their decomposition \cref{eq: decomposition Psi} since the mappings $c$ and $s$ are bounded on any compact of $\mathbb C\times \R$ \cite[Theorem 4.5]{Kirsch-2011}.

\begin{remark}\label{Annex: analysis arguments} 
	The existence, uniqueness and meromorphicity of the $\Psi^\pm$'s can also be derived from scattering theory using the analytic Fredholm theorem. The well-posedness of the problems defining $\Psi^\pm$ is a consequence of the energy conservation and the poles of the meromorphic extensions correspond to the so-called scattering resonances \cite{Zworski-scattering-resonances}.

\end{remark}
\bibliographystyle{siamplain}
\bibliography{biblio}
\end{document}